\documentclass[twocolumn]{autart}

\usepackage[utf8]{inputenc}
\usepackage{comment}
\usepackage{algorithm}
\usepackage{algpseudocode}
\usepackage{breqn}
\usepackage{graphicx}
\usepackage[version=4]{mhchem}
\usepackage{siunitx}
\usepackage{longtable,tabularx}
\usepackage{graphics} 
\usepackage{float}
\usepackage{epsfig} 
\usepackage{times} 
\usepackage{amsmath} 
\usepackage{amssymb}  
\usepackage{mathtools}
\usepackage{pifont}
\usepackage[dvipsnames]{xcolor}
\usepackage{amsthm}
\usepackage{enumitem}
\usepackage{bm}
\usepackage{bbm}
\usepackage{soul}

\newcommand{\trace}{\textit{tr}}

\newcommand{\E}{\mathbb{E\,}}
\newcommand{\Cov}{\text{cov}} 

\setlength\LTleft{0pt} 
\newcommand{\R}{\mathbb{R}}

\newtheorem*{definitiondes}{Definition (design cost)}
\newtheorem{lemma}{Lemma}
\newtheorem*{remark}{Remark}
\newtheorem{theorem}{Theorem}
\newtheorem{proposition}{Proposition}
\newtheorem{corollary}{Corollary}
\DeclareMathOperator*{\argmin}{arg\,min}

\newcommand{\python}{\mbox{\textsc{Python}}}
 

\definecolor{paleGreen}{rgb}{.3, .7, .3}
\definecolor{coolBlue}{rgb}{.3, .5, 1}
\definecolor{rosePink}{rgb}{.9, .5, .4}
\definecolor{ghost}{rgb}{.8, .8, .8}

\begin{document}

\begin{frontmatter}

\title{State estimation for control: an approach for output-feedback stochastic MPC} 

\author[UCSD,SDSU]{Mohammad S. Ramadan}\ead{msramada@eng.ucsd.edu},    
\author[UCSD]{Robert R. Bitmead}\ead{rbitmead@eng.ucsd.edu}  

and \author[SDSU]{Ke Huang}\ead{khuang@sdsu.edu }

\address[UCSD]{Department of Mechanical \&\ Aerospace Engineering, University of California, San Diego, La Jolla CA 92093-0411, USA.}   

\address[SDSU]{Electrical \&\ Computer Engineering Department, San Diego State University, San Diego CA 92182, USA.}  %

\begin{keyword}                           
State Estimation, Stochastic Model Predictive Control, Chance Constraints, Partially Observed Markov Decision Process.       
\end{keyword}

\begin{abstract}
The paper provides a new approach to the determination of a single state value for stochastic output feedback problems using paradigms from Model Predictive Control, particularly the distinction between open-loop and closed-loop control and between deterministic optimal control and stochastic optimal control. The State Selection Algorithm is presented and relies on given dynamics and constraints, a nominal deterministic state-feedback controller, and a sampling based method to select the best state value, based on optimizing a prescribed finite-horizon performance function, over the available candidates provided by a particle filter. The cost function is minimized over the horizon with controls determined by the nominal controller and the selected states. So, the minimization is performed not over the selection of the control other than through the choice of state value to use. The algorithm applies generally to nonlinear stochastic systems and relies on Monte Carlo sampling and averaging. However, in linear quadratic polyhedrally constrained cases the technique reduces to a quadratic program for the state value. The algorithm is evaluated in a set of computational examples, which illustrate its efficacy and limitations. Numerical aspects and the opportunity for parallelization are discussed. The examples demonstrate the algorithm operating, in closed-loop with its attendant particle filter, over the long horizon.
\end{abstract}
\end{frontmatter}

\section{Introduction}
We consider state selection for output feedback control of nonlinear stochastic systems. Starting with a filtered particle density of the state and a nominal control law, an algorithm is presented to choose a single state value from the density, which is then used in the control. The algorithm proceeds from the repeated selection of a single particle followed by a closed-loop stage, which generates a set of candidate control sequences, which then are evaluated for open-loop performance averaged over all the particles. This appearance of open-loop and closed-loop calculations highlights a significant distinction between deterministic and stochastic optimal feedback control, as is the propagation of single estimates versus whole densities. Both are discussed shortly. This also brings to the fore a difficulty in the formulation of output feedback stochastic MPC, which provides a touchstone control problem from which to appreciate the methods and, in part, their genealogy.

\subsection*{Context and MPC}
The surge of interest and research in Model Predictive Control (MPC) since the late 1980s is in large measure attributable to its capacity to handle constraints \cite{mayne2014model}. It does this within a receding-horizon optimal control context, which solves repeatedly a constrained open-loop optimal control problem from the current state, presumed to be available. That is, MPC is formulated as full-state feedback with the feedback being achieved by the receding horizon device of applying only the first element of the open-loop control sequence, each element of which is a function of the initial state, before measuring the next state and re-solving the open-loop problem. Being open-loop, each of these constrained optimization problems is manageable in its complexity. 

MPC using partially observed states enters into the realms of stochastic optimal control, which is known to be computationally intractable in all but the simplest of cases \cite{fel1966optimal,bar1981stochastic}, with or without constraints. Further, the presence of an equivalent open-loop optimal solution sequence for these problems evanesces; optimal controls are necessarily feedback only \cite{kumar2015stochastic}. Our study here is to walk a middle path between the in computational intractability (and optimality) of stochastic optimal control and the convenience of open-loop methods. Given a specific state feedback control law, $u_k=\kappa(x_k)$, as might be provided, say, by explicit MPC,  we develop an heuristic (and evidently suboptimal) approach to the selection of a single state value from the state particle filtered conditional density. The control law $\kappa$ thus effected relies on the filtered density solely through the selection process. For unconstrained Linear Quadratic problems, the selection of the conditional mean of the particle density would be optimal \cite{aastrom2012introduction}. However for constrained and/or nonlinear problems, state selection depends on the given control law and evaluation, in open-loop and on the particle ensemble, of both constraint satisfaction and a performance measure.  The evaluation of candidate state vectors builds on the finite-horizon open-loop nature of the MPC iteration and combines two aspects along a prediction horizon: the probability of constraint violation and the calculation of a predicted performance function, both averaged over the particle density, as is explicated later. 

Stochastic optimal control with partially observed state, which we abbreviate to \textit{stochastic optimal control}, is solved using Stochastic Dynamic Programming \cite{kumar2015stochastic,bertsekas2012dynamic} which requires propagation of full conditional densities at each time.
 The optimal control law necessarily is feedback-only; it cannot be precomputed along a horizon because it depends on anticipated measurements and the associated conditional density propagation. This should be compared to deterministic optimal control (and implicitly MPC methods) where the open-loop optimal control can be computed, say via Pontryagin's Maximum Principle, and coincides as a function of time with the feedback optimal control of dynamic programming. It is this feature of deterministic optimal control, which receding-horizon MPC relies upon for the generation of a feedback control law from an open-loop constrained optimization. The optimization, since it is open-loop, is computationally achievable. These concepts of open- and closed-loop underpin the computational issues in stochastic optimal control.

Because of the proscriptive burden of stochastic optimal control, several less cumbersome but approximate and suboptimal approaches to these problems have been developed.
\begin{itemize}
\item [\ding{93}] Wide-sense approaches, in which state uncertainty is presented by approximates of its first two central moments, and an approximate dynamics by linearization or Taylor expansion, of the propagation of these moments with the input and anticipated measurements, is used \cite{bar1981stochastic}.
\item [\ding{93}] Explicit probing, that is, including a state variance-based expression explicitly in the cost of a deterministic version of the stochastic problem, as in \cite{la2016dual}.
\item [\ding{93}] Stochastic MPC approaches in which active learning, or the reliance on future measurements, is typically dropped. This permits achieving an open-loop solution. Example stochastic MPC approaches that are designed to deal with initial state uncertainty include:
\begin{itemize}
    \item [\ding{75}] a modified version of scenario-based approaches directed towards initial state uncertainty \cite{sehr2017particle},
    \item [\ding{75}] sequential Monte Carlo approaches for sampling the input space, for nonlinear but unconstrained problems \cite{kantas2009sequential},
    \item [\ding{75}] stochastic tube approaches \cite{yan2005incorporating,cannon2012stochastic,heirung2018stochastic}, which by their formulation are limited in applicability to linear systems.
\end{itemize}  
\end{itemize}
A more complete review of stochastic MPC approaches can be found in \cite{mesbah2016stochastic}. Here, we present an algorithm akin to the stochastic MPC approaches (or equivalently to open-loop feedback control \cite{bertsekas2012dynamic}). Differently from these approaches, we seek to select the initial state value rather than designing the controller. At variance from stochastic tube approaches, our formulation extends naturally to constrained nonlinear dynamics. This is due to the sampling approach we use, which, in contrast to \cite{kantas2009sequential}, is parallelizable over the samples.

As discussed by Striebel in \cite{striebel1965sufficient}, purpose or function should guide data reduction efforts. Our approach is to focus on the selection of a single state vector value from the ensemble of particles representing the density. This reduction is influenced by a stated control objective, with a given -- as yet, neither necessarily stabilizing, nor optimal nor constraint-feasible -- nominal control law. So the control value, guided by the control objective, is indirectly determined by the nominal law and the selection process. This differs from the other methods which directly select the control value. The method proceeds from the following information: the system state equation with white process noise, a nominal state-feedback control law, the constraints, the noise density, a filtered particle state density, a cost function along the prediction horizon, state and input constraints, and their violation rate tolerance. Running alongside this control computation is the propagation of the filtered state particle density, whose properties also affect the closed-loop behavior.

The incorporation of the initial state as a decision variable in MPC was used previously in the literature, for linear dynamics with full state-feedback. In \cite{schluter2022stochastic}, the initial state is augmented to the decision variable of a tube-based MPC problem, to increase the likelihood of establishing feasibility. Also, in \cite{mayne2005robust}, the previous concept of augmentation is used, but to achieve constant value function over an invariant set around the origin. our approach does not assume a known initial state and extends naturally to nonlinear dynamics.

In the linear problem with linear nominal feedback control, quadratic cost and polyhedral constraints, the solution developed by the state selection approach coincides with a quadratic program on the initial states.

\subsection*{Outline}
The precise problem formulation and our proposed algorithm are given in Section~\ref{section: ProblemFormulation}. Section~\ref{section: properties} provides some rudimentary properties of our approach and relates it to stochastic MPC. The computational workload, including the propagation of the state particle state filter, and the possibility of parallel computing of the proposed algorithm are explained in Section~\ref{Section:NonlinearSystemsAlgorithm}. The algorithm's special case, for linear systems with polyhedral constraints, is discussed in Section~\ref{Section:LinearPolyhedron}, along with a result showing that, in the unconstrained case, the algorithm yields the conditional mean. These properties of the algorithm in the linear, quadratic, polyhedral case are the strongest technical support for the algorithm. Section~\ref{Section:examples} includes numerical examples for the nonlinear case and linear special case.

\section{Problem Formulation} \label{section: ProblemFormulation} 
At the current time $k=0$, we possess the following information.
\begin{enumerate}[label=\Roman*.]
\item Discrete-time state dynamics
\begin{align}\label{NonSys}
x_{k+1}=f(x_k,u_k,w_k),
\end{align}
where state $x_k\in\mathbb R^{r_x}$, control input $u_k\in\mathbb R^{r_u}$, exogenous disturbance $w_k\in\mathbb R^{r_w}$.
\item Stochastic disturbance process, $\{w_k\}$, assumed to be independent and identically distributed (i.i.d.) possessing known density $\mathcal W$. State $x_0$ is independent of $w_k$ for all $k$.
\item State and input constraint sets, $\mathbb X$ and $\mathbb U$, respectively, and $\epsilon\in [0,1)$, an acceptable probabilistic constraint violation rate.
\item Initial state $x_0$-density $p_0$, provided as a collection of particles $\Xi=\{\xi^i_0\in\mathbb R^{r_x}, j=1,\dots,L\}$.
\item Nominal full-state-feedback control law $u_k=\kappa(x_k),$  which is (so far) neither assumed to be recursively feasible with respect to $\mathbb X$ and $\mathbb U$ nor optimal with respect to the following, or indeed any, cost.\footnote{While the nominal control law need not be feasible nor optimal, its selection is material for the performance and feasibility of state selection. This is demonstrated in Section~\ref{Section:examples} by evaluation of the state selection algorithm with several controllers.}
\item An $N$-stage finite-horizon trajectory cost function $J$
\begin{align*}
    J = \sum_{k=0}^N \ell_k(x_k,u_k).
\end{align*}
\end{enumerate}

Because the initial state $x_0$ is not precisely known (unless $p_0$ is a point mass function), we seek to select a candidate state value, $x^\star_0$, supported by the initial density which yields: feasibility, perhaps probabilistically or statistically in simulation, on the finite horizon; and, a favorable influence on the subsequent evaluation of the trajectory cost $J$ in ensemble average over the particles.

If the dynamics in (\ref{NonSys}) have $w_ks$ zero and $x_0$ known, we refer to this as the \textit{deterministic} case.

Next we propose an algorithm, \textit{the State Selection Algorithm}, which returns a \textit{candidate state} $x_0^\star$, which is fed to the control $u_0=\kappa(x_0^\star)$ and applied to the system. The system output is used to update the particle filter density and the process repeated with new current time $k=0$. This will be shown by the examples provided in Section~\ref{Section:examples}.
\section*{State Selection Algorithm}
\begin{enumerate}
\item \label{step:alpha} Select sample repetition number $M$ and statistical feasibility tolerance $\alpha \in [0,\epsilon)$, parameters of the algorithm.
    
    \item For each $i \in \{1,2,\hdots,L\}$, choose $x_0'=\xi^i\in\Xi$.\label{step2} 
    \begin{enumerate}
        \item \label{step:lambda} With $(Nr_w)$-vectors
        \begin{align*}
        W'_j&=\begin{pmatrix}w^{\prime^T}_{0,j}&\hdots&w^{\prime^T}_{N-1,j}\end{pmatrix}^T,\\
        W''_j&=\begin{pmatrix}w^{\prime\prime^T}_{0,j}&\hdots&w^{\prime\prime^T}_{N-1,j}\end{pmatrix}^T,
        \end{align*}
sample, from $\mathcal{W}^N$ and $\Xi$, $M$ independent realizations of the $(2Nr_w+r_x)$-vector sequence 
        $$\left \{\begin{pmatrix}
            W'_{j}\\W''_{j}\\x_{0,j}'' \end{pmatrix}:\, j=1,\hdots,M\right\}.$$
            
            \item \label{step:explore}For each $j$ and $k$, compute the $\kappa$-closed-loop state sequence from $x_0'$, that is, $x_{0,j}'=x_0'$.
\begin{align}
    x_{k+1,j}'=f(x_{k,j}',\kappa(x_{k,j}'),w_{k,j}'). \label{eq:dashrec}
\end{align}
This defines the $M$ samples of the $N$-long closed-loop control sequence $\{\kappa(x_{k,j}')\}_{k=0}^{N-1}$ from the current initial state candidate $x_0'$.
\item For each $j\in\{1,\dots,M\}$ and $k\in\{0,N-1\}$, compute the open-loop-controlled state sequence from $x''_{0,j}$,
\begin{align}
    x_{k+1,j}''=f(x_{k,j}'',\kappa(x_{k,j}'),w_{k,j}'').\label{eq:ddashrec}
\end{align}
        \item\label{betaCondition} Compute for each $k$ the sample-average closed-loop control violation rate,\footnote{$\mathbbm{1}(\cdot)$ is the indicator function of an event.}
        \begin{align}\label{eq:betahat}
        \hat \beta_k(x_0')=\frac{1}{M}\sum_{j=1}^M\mathbbm{1}(\kappa(x_{k,j}') \in \mathbb{U}).
        \end{align}
        \item\label{lambdaCondition} Compute for each $k$ the sample-average open-loop-controlled state violation rate,
        \begin{align}\label{eq:lambdahat} 
        \hat \lambda_k(x_0')=\frac{1}{M}\sum_{j=1}^M\mathbbm{1}(x_{k,j}'' \in \mathbb{X}).
        \end{align}
        \item If $\hat \beta_k(x_0') \geq 1-\alpha$ and $\hat \lambda_k(x_0') \geq 1-\alpha$, for all $k$, then declare this candidate state, $x'_0$, to be \textit{feasible} and proceed. Otherwise, return to Step~\ref{step2}.
        \item If $x'_0$ is feasible, calculate its sample-average performance,
        \begin{align*}
            J_c^M(x_0')=\frac{1}{M}\sum_{j=1}^M \sum_{k=0}^N\ell_k(x_{k,j}'',\kappa(x_{k,j}')).
        \end{align*}
    \end{enumerate}
    \item \label{Step:candidateStateDef} Pick $x_0 ^\star$ to be the feasible $x_0'$ minimizing $J_c^M(\cdot)$, provided the feasible set is non-empty.
\end{enumerate}

In this algorithm, the sequences $\{x_{k,j}'\}_k$ and $\{x_{k,j}''\}_k$ are realizations of the stochastic processes generated by recursions \eqref{eq:dashrec} and \eqref{eq:ddashrec} driven by constructed independent white noise sequences $W'_j$ and $W''_j$.

The processes are functions of $\kappa$ and $x_0'$. To keep our notation compact, we omit this dependency. For a large number of realizations, $M$, the average cost $J_c^M(\cdot)$ converges, under regularity conditions \cite{resnick2019probability,doucet2000sequential},
\begin{align}
    &J_c(x_0')=\E \left (\sum_{k=0}^{N} \ell_k\left(x_k'',\kappa \left (x_{k}'\right)\right) \right ),\nonumber\\
    &=\E_{x_0''}\E_{W'}\E_{W''} \left (\sum_{k=0}^{N} \ell_k\left(x_k'',\kappa \left (x_{k}'\right)\right) \right ).\label{StateCandidacyMeasure} 
\end{align}
The expectation is factored due to the mutual independence between the elements $x_0'',W',W''$ \cite{resnick2019probability}. Further, the sample averages of indicator functions in \eqref{eq:betahat} and \eqref{eq:lambdahat} converge to probabilities of constraint satisfaction.

We shall denote the set of feasible state values $x'_0$ under this probability distribution as
\begin{align}
    \mathbb{X}_0^\epsilon&=\{ x_0' \in \Xi |\, \mathbb{P}(x_k'' \in \mathbb{X}) \geq 1-\epsilon,\,k=0,1,\hdots,N,\nonumber\\
    &\hskip -3mm \mathbb{P}(\kappa(x_k') \in \mathbb{U}) \geq 1-\epsilon,\, k=0,1,\hdots,N-1 \}. \label{bbX_0}
\end{align}

We define the \textit{candidate state} $x_0^\star$ as
\begin{equation}
    x_0^\star=\arg\min_{x_0' \in \mathbb{X}_0^\epsilon} J_c(x_0'). \label{x_0*def}
\end{equation}
The corresponding cost of $x_0^\star$ is $J_c^\star$
\begin{equation}
     J_c^\star=\min_{x_0' \in \mathbb{X}_0^\epsilon} J_c(x_0')=J_c(x_0^\star).
\end{equation}
Although these definitions are predicated on an infinite $M$, it will be shown later that $M$ of $\mathcal{O}(\log L)$ is sufficient for providing feasibility and optimality guarantees with probability/reliability at least $1-\delta$. For example, $M=135$ is used in the numerical examples in Section~\ref{Section:examples}. Where proofs are developed in the next two sections, the results are derived in terms of expectations and probabilities.

An interpretation of the central recursions \eqref{eq:dashrec} and \eqref{eq:ddashrec} is that, for each particle $x_0'\in\Xi$, \eqref{eq:dashrec} generates $M$ control sequences $\{u_{k,j}=\kappa(x'_{k,j})\}$ of a stochastically excited closed-loop system. Recursion \eqref{eq:ddashrec} then uses sample averages of the open-loop cost of these closed-loop sequences themselves then averages over the particle state density. In a sense, the search is over the performance, averaged along the horizon $N$ and over the particle density $\Xi$, of those randomized closed-loop sequences as functions of $x'_0$.

\section*{Regularity conditions for the State Selection Algorithm}
Step~(\ref{step:explore}) above creates a collection of $M$ closed-loop control sequences $\{\kappa(x_{k,j}')\}_{k=0}^{N-1}$, which are functions of $x_{0,j}'$ and the $w_{k,j}'$s; the steps after that examine the feasibility with respect to $(\mathbb X, \mathbb U)$ and performance with $J$.  In order that this stage is informative for state selection, it is important that the states and control law are suitable excited by differing $x_0'$ and by $w_k'$. Considering the system
\begin{align}
x'_{k+1}&=f(x_k',\kappa(x_k'),w_k'),\quad x_0',\label{eq:access}\\
u'_k&=\kappa(x_k'),\label{eq:obsv}
\end{align}
the algorithm requires the accessibility of \eqref{eq:access} from process noise $w_k'$, and the observability and reconstructibility of the pair \eqref{eq:access}-\eqref{eq:obsv}. This is to ensure the sensitivity of the control sequences to the initial state value and the noise process. These are conditions on the system and control law $\kappa$. Without a diverse set of control sequences for selecting $x_0'$, the non-emptiness of $\mathbb X^\epsilon_0$ cannot be assured and the minimization be effective.

As with stochastic optimal control, the closed-loop performance rests on both the control law, $\kappa$, and on the information state \cite{kumar2015stochastic} approximated by particle density $\Xi$. The propagation of the particle filter should avoid depletion issues and should properly reflect the conditional state density. We assume that it contains the conditional mean, for example, and is sufficiently encompassing to yield a rich set of feasible control sequences.

Other than these remarks, we do not delve deeper. Although, in the computed examples we point to problems with accessibility of feasible states and with the performance limitations when the particle filter is too localized. 

\section{Properties of the candidate state} \label{section: properties}
Borrowing from certainty equivalence control, where the least-squares-best state estimate, $\hat \xi_{0|0}$ the conditional mean of $\Xi$, is selected as the candidate state, we compare the cost $J_c$ for $x_0^\star$ versus that for $\hat\xi_{0|0}$.
\vskip 3mm
\begin{proposition}
Suppose that $\hat \xi_{0|0}$, the sample average of the particles in $\Xi$, is feasible, that is, $\hat \xi_{0|0}\in\mathbb{X}_{0}^\epsilon$. Then $J_c(x_0^\star)~=~J_c^\star \leq J_c(\hat \xi_{0|0})$.
\qed
\end{proposition}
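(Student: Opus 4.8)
The plan is to prove the statement directly from the optimality that \emph{defines} $x_0^\star$, so that almost all of the work is bookkeeping against the definitions already in place. By \eqref{x_0*def} and the line immediately following it, $x_0^\star$ is a minimizer of $J_c(\cdot)$ over the feasible set $\mathbb{X}_0^\epsilon$ and $J_c^\star=J_c(x_0^\star)$ is the attained minimum value; the equality $J_c(x_0^\star)=J_c^\star$ asserted in the proposition is thus nothing more than a restatement of this definition. All of the content therefore sits in the inequality $J_c^\star\leq J_c(\hat\xi_{0|0})$.

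For the inequality I would appeal to the elementary property of a minimum over a set: every admissible point has cost no smaller than the minimal value. The hypothesis furnishes precisely the admissibility we need, $\hat\xi_{0|0}\in\mathbb{X}_0^\epsilon$, and in passing it also guarantees that $\mathbb{X}_0^\epsilon$ is non-empty, so that $x_0^\star$ and $J_c^\star$ are in fact well-defined (recall that \eqref{x_0*def} is stated only ``provided the feasible set is non-empty''). Since $\hat\xi_{0|0}$ is then a legitimate competitor in the minimization, $J_c^\star=\min_{x_0'\in\mathbb{X}_0^\epsilon}J_c(x_0')\leq J_c(\hat\xi_{0|0})$, and chaining this with $J_c(x_0^\star)=J_c^\star$ delivers the claim.

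The single point that requires care --- and the one I expect to be the real obstacle --- is reconciling the assertion with the definition \eqref{bbX_0}, which nominally restricts the candidates to elements of the particle set $\Xi$, whereas $\hat\xi_{0|0}$ is the sample average of those particles and in general is not itself a particle. I would resolve this by reading the membership $\hat\xi_{0|0}\in\mathbb{X}_0^\epsilon$ through the two probabilistic constraint-satisfaction conditions appearing in \eqref{bbX_0}, understood at an arbitrary admissible initial value rather than only at particles; this is consistent with the idealized, infinite-$M$ viewpoint under which $J_c$ and $\mathbb{X}_0^\epsilon$ are themselves defined. Once the feasible set is read in this way the minimization ranges over all states meeting those conditions, the hypothesis is meaningful, and the remainder is a one-line consequence of the definition of the minimum. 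The interest of the result then lies in its certainty-equivalence reading: whenever the least-squares estimate $\hat\xi_{0|0}$ is itself feasible, state selection can only match or improve upon its predicted cost, never degrade it.
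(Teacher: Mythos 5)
Your proof is correct and follows essentially the same route as the paper, which simply notes that the inequality is an immediate consequence of the minimization over $x_0'$ in the final stage of the algorithm, with the hypothesis $\hat\xi_{0|0}\in\mathbb{X}_0^\epsilon$ making the conditional mean an admissible competitor. Your additional observation that $\hat\xi_{0|0}$ is generally not an element of $\Xi$, so that the membership in \eqref{bbX_0} must be read through the probabilistic constraints at an arbitrary initial value, is a fair point of care that the paper glosses over but that does not change the one-line argument.
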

This is an immediate consequence of the minimization over $x'_0$ in the final stage of the state selection algorithm.

Next, we define and compare the design cost function $J_{des}$ for comparison purposes with the cost $J_c$.
\\
\begin{definitiondes}
Assuming full state feedback in (\ref{NonSys}), the design cost of state value $x'_0$ and its associated optimal causal control law $\kappa^\star$ are given by
    \begin{align}
    J_{des}(x_0')&=\min_{\kappa} \E_{W'} \left (\sum_{k=0}^{N} \ell_k(x_k',\kappa (x_{k}'))\right ),\nonumber\\
    &=\E_{W'} \left (\sum_{k=0}^{N} \ell_k(x_k',\kappa^* (x_{k}'))\right ),\label{OptProbDes}
    \end{align} 
\end{definitiondes}
where $\kappa^*$ is the minimizing causal control law. In the first line of (\ref{OptProbDes}), $\{x_k'\}_{k\geq0}$ is constructed using nominal control law $\kappa$, while in the second line, using $\kappa^\star$. The minimizer is assumed to exist, otherwise $\min$ is replaced by $\inf$.
\\
\begin{proposition}\label{proposition1}
If the initial state is known and feasible, that is, $p_0$ is a point mass located at $z \in \R^{r_x}$, and $\mathbb{X}_0^\epsilon=\{z\}$, then $J_{des}(z)\leq J_c^*$.
\end{proposition}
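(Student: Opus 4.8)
The plan is to exploit the fact that a point-mass initial density collapses both trajectory families in the algorithm to the same starting point, and then to recognize the open-loop control sequences appearing in $J_c$ as particular causal control laws that the design cost $J_{des}$ already optimizes over. First I would record the consequences of the hypotheses. Since $p_0$ is a point mass at $z$, the particle set is $\Xi=\{z\}$, so the only candidate is $x_0'=z$ and the sampled open-loop initial state is $x_0''=z$ as well; because $\mathbb X_0^\epsilon=\{z\}$ the minimization defining $J_c^\star$ is over a singleton, hence $J_c^\star=J_c(z)$. The claim $J_{des}(z)\le J_c^\star$ therefore reduces to showing $J_{des}(z)\le J_c(z)$.

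Next I would condition on the closed-loop noise $W'$. For each fixed realization of $W'$, recursion \eqref{eq:dashrec} started at $z$ produces a deterministic trajectory $\{x_k'\}$ and hence a deterministic control sequence $u_k(W')=\kappa(x_k')$. The crucial observation is that this fixed sequence, read as the state-independent but time-varying feedback rule $\gamma_k^{W'}(\cdot)\equiv u_k(W')$, is itself a causal control law. Applying $\gamma^{W'}$ to the system started at $z$ and driven by the independent noise $W''$ reproduces exactly recursion \eqref{eq:ddashrec} and the integrand of $J_c$, so that the inner expectation over $W''$ equals the expected cost of the admissible law $\gamma^{W'}$.

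Because $W''$ is distributed identically to $W'$ and $J_{des}(z)$ is the infimum of this expected cost over all causal laws, the cost of the particular law $\gamma^{W'}$ is bounded below by $J_{des}(z)$ for every realization of $W'$; that is, $\E_{W''}\big(\sum_{k=0}^N\ell_k(x_k'',\kappa(x_k'))\big)\ge J_{des}(z)$ pointwise in $W'$. Taking the remaining expectation over $W'$ and using that $J_{des}(z)$ is a constant then yields $J_c(z)=\E_{W'}\E_{W''}(\cdots)\ge J_{des}(z)$, completing the argument.

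The step I expect to carry the weight is the reinterpretation of the open-loop sequence $\{u_k(W')\}$ as an admissible causal control law, together with the interchange of the roles of $W'$ and $W''$. One must be explicit that $J_{des}$ minimizes over the time-varying causal class, so that a state-independent open-loop sequence is indeed admissible and dominated by $\kappa^\star$; and one must invoke the i.i.d. structure of the disturbance so that the minimum defining $J_{des}$, written with $W'$, also lower-bounds the $W''$-expectation. If instead the admissible class in $J_{des}$ were restricted to be time-invariant, the embedding would fail and the inequality would require a separate justification.
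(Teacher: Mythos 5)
Your argument is correct, and although it rests on the same underlying idea as the paper's proof --- embedding the policy whose cost is $J_c(z)$ into the class of causal laws over which $J_{des}$ minimizes --- it executes that idea by a genuinely different and, in one respect, more general route. The paper writes $J_c(z)$ with the optimal law $\kappa^\star$ playing the role of the nominal $\kappa$, and then asserts in a single step that replacing the open-loop state $x_k''$ by the closed-loop state $x_k'$ inside the stage cost cannot increase the expectation, ``because $\{x_k'\}_k$ is the optimal sequence''; the remaining lines identify the resulting expression with $J_{des}(z)$. You instead condition on $W'$, recognize the realized sequence $\{\kappa(x_k')\}$ as an admissible state-independent, time-varying causal law $\gamma^{W'}$, lower-bound its $W''$-averaged cost by the infimum defining $J_{des}(z)$ pointwise in $W'$, and then integrate. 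This buys two things: (i) it supplies the justification the paper leaves implicit, since the paper's comparison of $\E\,\ell_k(x_k'',\kappa^\star(x_k'))$ with $\E\,\ell_k(x_k',\kappa^\star(x_k'))$ is not evident term by term, whereas the aggregate bound via the infimum is immediate; and (ii) it establishes the inequality for an arbitrary nominal $\kappa$, whereas the paper's chain of equalities is literally valid only when the nominal law is taken to be $\kappa^\star$ (which is evidently how the proposition is meant to be read, given the surrounding discussion and Proposition~2). The two caveats you flag --- that the admissible class in $J_{des}$ must contain time-varying, state-independent laws, and that the i.i.d.\ disturbance structure is needed so that the $W'$-defined infimum also bounds a $W''$-expectation --- are exactly the right hypotheses to make explicit; the paper's corollary invoking time-varying LQ gains $K_k$ confirms that time-varying causal laws are intended to be admissible, so your embedding is legitimate.
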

\begin{proof}
\begin{align*}
     J_c^*&=J_c(z),\\
    &=\E_{W'}\E_{W''} \left (\sum_{k=0}^{N} \ell_k\left(x_k'',\kappa^* \left (x_{k}'\right)\right) \right ),\\
    &\geq \E_{W'}\E_{W''} \left (\sum_{k=0}^{N} \ell_k\left(x_k',\kappa^* \left (x_{k}'\right)\right) \right ),\\
    &=\E_{W'} \left (\sum_{k=0}^{N} \ell_k\left(x_k',\kappa^* \left (x_{k}'\right)\right) \right ),\\
    &=J_{des}(z),
\end{align*}
where the inequality is due to $\{x_k'\}_k$ being the optimal sequence resulting from applying the optimal control law $\kappa^*$ and starting from $x_0'=x_0''=z$. 
\end{proof}

Notice that for $J_{des}$ full state feedback is assumed for all $k \geq 0$, while the hypothesis of Proposition~\ref{proposition1} assumes full state feedback at time-0 only, in $J_c^*$. 
\vskip 3mm
\begin{proposition}\label{proposition2}
In the deterministic case, that is, $\Xi=\{z\}$ and $w_k=0$, for all $k$, if $z$ is feasible, then $J_{des}^*(z) = J_c^*$.
\end{proposition}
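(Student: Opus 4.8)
The plan is to prove the equality by sandwiching: use the previously established lower bound and upgrade it to an equality in the noise-free regime. Recall that $J_{des}(z)$ in \eqref{OptProbDes} already carries the minimization over $\kappa$, so the statement's $J_{des}^\star(z)$ is just $J_{des}(z)$. Since $\Xi=\{z\}$ is a singleton and $z$ is feasible by hypothesis, the feasible set is $\mathbb{X}_0^\epsilon=\{z\}$ and the minimization in \eqref{x_0*def} is vacuous, giving $J_c^\star=J_c(z)$. The hypotheses of Proposition~\ref{proposition1} (point mass at $z$ and $\mathbb{X}_0^\epsilon=\{z\}$) are met, so $J_{des}(z)\le J_c^\star$ is already in hand; it remains to show the reverse inequality, which I would obtain by showing that the single inequality in the proof of Proposition~\ref{proposition1} degenerates to an equality once $w_k\equiv 0$.

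The engine of the argument is that, with zero process noise, the closed-loop sequence $\{x_k'\}$ of \eqref{eq:dashrec} and the open-loop-controlled sequence $\{x_k''\}$ of \eqref{eq:ddashrec} coincide. First I would observe that $w_k\equiv 0$ forces the sampled $W'$ and $W''$ to vanish, so the expectations $\E_{x_0''}$, $\E_{W'}$, $\E_{W''}$ in \eqref{StateCandidacyMeasure} are trivial, and the point mass gives $x_0''=z=x_0'$. Then I would establish $x_k''=x_k'$ for all $k$ by induction: the base case is $x_0''=x_0'=z$, and the inductive step is immediate because \eqref{eq:dashrec} and \eqref{eq:ddashrec} apply the same control $\kappa^\star(x_k')$ with the same (zero) noise to the same state, so $x_{k+1}''=f(x_k'',\kappa^\star(x_k'),0)=f(x_k',\kappa^\star(x_k'),0)=x_{k+1}'$.

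With the two trajectories identified, I would retrace the chain in the proof of Proposition~\ref{proposition1}: $J_c^\star=J_c(z)=\sum_{k=0}^N\ell_k(x_k'',\kappa^\star(x_k'))=\sum_{k=0}^N\ell_k(x_k',\kappa^\star(x_k'))=J_{des}(z)$. The third equality is exactly where $x_k''=x_k'$ converts the earlier inequality into an identity, there being no longer any discrepancy between the trajectory the controls were optimized for and the trajectory they act upon; the final equality holds because, with $W'=0$, the design value \eqref{OptProbDes} is a deterministic optimal control problem whose minimizing law is the nominal $\kappa^\star$ and whose optimal trajectory is this same $\{x_k'\}$. Combined with the lower bound from Proposition~\ref{proposition1}, this gives $J_{des}(z)=J_c^\star$.

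The hard part is conceptual rather than computational: one must be explicit that the nominal law entering $J_c$ throughout this section is the design-optimal causal law $\kappa^\star$---as is already implicit in the proof of Proposition~\ref{proposition1}---so that the final sum is genuinely the minimized design cost and not merely an upper bound for it. The one substantive point to defend is that determinism annihilates precisely the open-loop-versus-closed-loop gap that generically separates $J_c$ from $J_{des}$: with no process noise there is no future measurement information to exploit, so applying the optimal feedback controls open-loop reproduces the closed-loop optimal trajectory. Everything else---the vacuous minimization over the singleton $\mathbb{X}_0^\epsilon$ and the collapse of the expectations---is routine.
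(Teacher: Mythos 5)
Your proposal is correct and follows essentially the same route as the paper: both arguments rest on the observation that $W'=W''=0$ and $x_0'=x_0''=z$ force $x_k'=x_k''$ for all $k$ via the recursions \eqref{eq:dashrec} and \eqref{eq:ddashrec}, which collapses the expectations and turns the chain from Proposition~\ref{proposition1} into a string of equalities identifying $J_c(z)$ with $J_{des}(z)$. Your added explicitness about the induction and about the nominal law being the design-optimal $\kappa^\star$ merely spells out what the paper leaves implicit.
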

\begin{proof}
Having $W'=W''=0$ and $x_0'=x_0''=z$ imply $x_k'=x_k''$ for all $k$, by the definitions \eqref{eq:dashrec} and \eqref{eq:ddashrec} of these two state sequences. Thus,
\begin{align*}
    J_c^*&=J_c(z),\\
    &=\E_{W'}\E_{W''} \left (\sum_{k=0}^{T} \ell_k\left(x_k'',\kappa^* \left (x_{k}'\right)\right) \right ),\\
    &=\left (\sum_{k=0}^{N} \ell_k\left(x_k',\kappa^* \left (x_{k}'\right)\right) \right ),\\
    &=J_{des}(z),
\end{align*}
\end{proof}

Proposition~\ref{proposition2} is a restatement that for the \textit{deterministic case}, open- and closed-loop controls are equivalent and full state feedback for $k\geq 0$ is equivalent to full state feedback at $k=0$ only. In such case, the \textit{candidate state} for the optimal control $\kappa^*$ is the true state itself.

\section{Computational complexity of the state selection algorithm} \label{Section:NonlinearSystemsAlgorithm}
First, a lower bound on $M$ is found, for guaranteeing $\epsilon$-probabilistic feasibility with a margin $\delta$. Then, the computation time required for applying \textit{the state selection algorithm} is discussed.
\subsection{$M=\mathcal O(\log L)$}
From Section~\ref{section: ProblemFormulation}, the initial state density is given as a finite particle mass function over the particle set $\Xi=\{\xi^i|\,i=1,2,\hdots,L\}$.
\begin{align}
    p_0(\cdot)=\sum_{i=1}^{L}  \delta (\cdot - \xi^i),
\end{align}
where $\delta$ is the Dirac-delta function on $\R^{r_x}$. Since $\Xi$ is finite, so too is $\mathbb{X}_0^\epsilon$ in (\ref{bbX_0}), can be written
\begin{align*}
        \mathbb{X}_0^\epsilon&= \bigcap_{k=1}^N\{ x_0' \in \Xi |\, \mathbb{P}(x_k'' \in \mathbb{X}) \geq 1-\epsilon\} \bigcap \\
    &\hskip -3mm\bigcap_{k=0}^{N-1}\{ x_0' \in \Xi |\,\mathbb{P}(\kappa(x_k') \in \mathbb{U}) \geq 1-\epsilon\}.
\end{align*}
By construction of the random vectors $x_k''$ and $x_k'$, the condition $\{x_k'' \in \mathbb{X}\}$ depends on $x_0'$, $x_0''$, $W'$ and $W''$, while $\{\kappa(x_k') \in \mathbb{U}\}$ depends on $x_0'$ and $W'$. Hence, both are functions of $x_0'$ and $\Lambda=(x_0'',W^\prime,W^{\prime \prime})$. They can be rewritten as
\begin{align*}
    G_k(x_0',\Lambda)=\{x_k'' \in \mathbb{X}\}, \, k=1,\hdots,N,
\end{align*}
and 
\begin{align*}
    G_{k+N+1}(x_0',\Lambda)=\{\kappa(x_k') \in \mathbb{U}\},\,k=0,\hdots,N-1.
\end{align*}
Hence, more compactly,
\begin{align*}
        \mathbb{X}_0^\epsilon&= \bigcap_{k=1}^{2N}\{ x_0' \in \Xi |\, \mathbb{P}(G_k(x_0',\Lambda)) \geq 1-\epsilon\}.
\end{align*}

Statistical feasibility tolerance, $\alpha$, at Step~\ref{step:alpha} of the algorithm satisfies $\alpha \in [0,\epsilon)$ and $\Lambda^j$, for $j=1,2,\hdots,M$, at Step~\ref{step:lambda}, are independent Monte Carlo samples of $\Lambda$. Define the set
\begin{align} \label{Constraints via sampling}
        \mathbb{X}_0^{\alpha,M}&= \bigcap_{k=1}^{2N}\{ x_0' \in \Xi |\, \frac{1}{M}\sum_{j=1}^M\mathbbm{1}(G_k(x_0',\Lambda^j)) \geq 1-\alpha\}.
\end{align}
This set is a subset of $\mathbb{X}_0^\epsilon$ with probability dependent on the number of samples, $M$. For a given reliability $\delta\in(0,1)$, a sufficiently large value of $M$ can be found so that the computed sample average, $\mathbb{X}_0^{\alpha,M}$, provides a suitable approximation of $\mathbb{X}_0^\epsilon$.

The following theorem is a version of Theorem~5 of \cite{luedtke2008sample} modified to suit the problem formulation of this paper.
\vskip 3mm
\begin{theorem} \label{Theorem:NumberOfSamples}
   For any $\delta \in (0,1)$, if 
    \begin{align} \label{Equation:MlowerBound}
        M \geq \frac{1}{2(\epsilon-\alpha)^2}\log \left( \frac{L}{\delta}\right),
    \end{align}
then 
\begin{align}
    \mathbb{P}(\mathbb{X}_0^{\alpha,M} \subseteq \mathbb{X}_0^\epsilon) \geq 1-\delta.
\end{align}
\end{theorem}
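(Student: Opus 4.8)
The plan is to bound directly the probability of the failure event $\{\mathbb{X}_0^{\alpha,M}\not\subseteq\mathbb{X}_0^\epsilon\}$ and show it does not exceed $\delta$ whenever $M$ satisfies \eqref{Equation:MlowerBound}. For a particle $x_0'\in\Xi$ and index $k\in\{1,\dots,2N\}$ write the true satisfaction probability $p_k(x_0')=\mathbb{P}(G_k(x_0',\Lambda))$ and its Monte Carlo estimate $\hat p_k(x_0')=\tfrac1M\sum_{j=1}^M\mathbbm{1}(G_k(x_0',\Lambda^j))$, so that $x_0'\in\mathbb{X}_0^\epsilon$ iff $p_k(x_0')\ge 1-\epsilon$ for all $k$, and $x_0'\in\mathbb{X}_0^{\alpha,M}$ iff $\hat p_k(x_0')\ge 1-\alpha$ for all $k$. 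The inclusion fails precisely when some particle passes every empirical test yet violates the true chance constraint for at least one index.

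First I would reduce the per-particle failure to a single one-sided deviation. For each $x_0'\notin\mathbb{X}_0^\epsilon$ fix a deterministic violated index $k^\star(x_0')$ with $p_{k^\star}(x_0')<1-\epsilon$ (say the smallest such); this choice depends only on the fixed probabilities $p_k(x_0')$, not on the samples. If such an $x_0'$ nonetheless lies in $\mathbb{X}_0^{\alpha,M}$, then in particular $\hat p_{k^\star}(x_0')\ge 1-\alpha$, and since the threshold exceeds the mean by $1-\alpha-p_{k^\star}(x_0')>\epsilon-\alpha>0$, this event is contained in $\{\hat p_{k^\star}(x_0')-p_{k^\star}(x_0')\ge \epsilon-\alpha\}$. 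The summands $\mathbbm{1}(G_{k^\star}(x_0',\Lambda^j))$, $j=1,\dots,M$, are i.i.d.\ Bernoulli variables taking values in $[0,1]$ with common mean $p_{k^\star}(x_0')$, because the $\Lambda^j$ are independent copies of $\Lambda$ and $G_{k^\star}(x_0',\cdot)$ is a fixed event; the one-sided Hoeffding inequality therefore gives $\mathbb{P}(\hat p_{k^\star}(x_0')-p_{k^\star}(x_0')\ge \epsilon-\alpha)\le \exp(-2M(\epsilon-\alpha)^2)$.

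I would then close with a union bound over the at most $L$ infeasible particles, obtaining $\mathbb{P}(\mathbb{X}_0^{\alpha,M}\not\subseteq\mathbb{X}_0^\epsilon)\le L\exp(-2M(\epsilon-\alpha)^2)$, and requiring the right-hand side to be at most $\delta$ rearranges to $2M(\epsilon-\alpha)^2\ge \log(L/\delta)$, i.e.\ exactly \eqref{Equation:MlowerBound}, whence $\mathbb{P}(\mathbb{X}_0^{\alpha,M}\subseteq\mathbb{X}_0^\epsilon)\ge 1-\delta$. The main obstacle is the index reduction in the second step: a naive union over both the $L$ particles and the $2N$ constraint indices would inflate the estimate to $2NL\exp(\cdot)$ and spoil the stated constant. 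The point is that a single, deterministically chosen violated index already certifies infeasibility of each bad particle, so the union runs over particles only. The remaining care is in checking the direction of the Hoeffding bound (overestimation of a small true probability) and that the independence of the $\Lambda^j$ genuinely makes the indicators i.i.d.\ Bernoulli; the rest is the elementary rearrangement for $M$.
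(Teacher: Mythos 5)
Your proposal is correct and follows essentially the same route as the paper's proof: fix one deterministically chosen violated index per infeasible particle, apply the one-sided Hoeffding bound to the i.i.d.\ indicator variables to get $\exp(-2M(\epsilon-\alpha)^2)$, and union-bound over the at most $L$ bad particles before rearranging for $M$. The only cosmetic difference is that the paper selects the index minimizing $\mathbb{P}(G_l(x,\Lambda))$ whereas you take the smallest violated index; either deterministic choice works, and your explicit remark that the union need not run over the $2N$ constraint indices is exactly the point that preserves the stated constant.
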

\begin{lemma} \label{Hoeffdings}
    (\textbf{Hoeffding's inequality \cite{hoeffding1994probability}}). For independent random variables $Z_q,\,q=1,\hdots,\bar M$, $\mathbb{P}(Z_q \in [a_q,b_q])=1$, $a_q\leq b_q$, for all $t\geq0$
    \begin{align*}
        \mathbb{P}\left(\sum_{q=1}^{\bar M} (Z_q-\E\,Z_q) \geq t\bar M\right) \leq \exp \left(-\frac{2\bar M^2t^2}{\sum_{q=1}^{\bar M}(b_q-a_q)^2}\right)
    \end{align*}
\qed
\end{lemma}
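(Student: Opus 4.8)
The plan is to prove Hoeffding's inequality by the standard exponential (Chernoff) method, reducing the tail bound to a moment generating function estimate for each summand followed by an optimization over a free parameter. First I would fix $s>0$ and apply the exponential Markov inequality to the event $\{\sum_q(Z_q-\E Z_q)\geq t\bar M\}$, which gives
\begin{align*}
\mathbb{P}\left(\sum_{q=1}^{\bar M}(Z_q-\E Z_q)\geq t\bar M\right)\leq e^{-st\bar M}\,\E\left[\exp\left(s\sum_{q=1}^{\bar M}(Z_q-\E Z_q)\right)\right].
\end{align*}
By the assumed independence of the $Z_q$, the expectation of the exponential of the sum factors into a product, so the right-hand side becomes $e^{-st\bar M}\prod_q\E[e^{s(Z_q-\E Z_q)}]$.

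The key technical step, and the main obstacle, is bounding each factor $\E[e^{s(Z_q-\E Z_q)}]$. Writing $X=Z_q-\E Z_q$, this is a mean-zero random variable supported on an interval $[a,b]$ with $b-a=b_q-a_q$. I would establish Hoeffding's lemma, namely $\E[e^{sX}]\leq\exp(s^2(b-a)^2/8)$. The cleanest route uses convexity: since $x\mapsto e^{sx}$ is convex, for $x$ in the support interval the chord bound $e^{sx}\leq\frac{b-x}{b-a}e^{sa}+\frac{x-a}{b-a}e^{sb}$ holds; taking expectations and using $\E X=0$ removes the linear term, reducing the bound to a one-variable estimate $\log\E[e^{sX}]\leq\phi(u)$ with $u=s(b-a)$, where $\phi$ satisfies $\phi(0)=\phi'(0)=0$ and $\phi''\leq 1/4$, giving $\phi(u)\leq u^2/8$ by Taylor's theorem with remainder. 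The ceiling $\phi''\leq1/4$ is the essential content, reflecting that the variance of any random variable confined to $[a,b]$ is at most $(b-a)^2/4$.

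Combining the factorized bound with Hoeffding's lemma yields, for every $s>0$,
\begin{align*}
\mathbb{P}\left(\sum_{q=1}^{\bar M}(Z_q-\E Z_q)\geq t\bar M\right)\leq\exp\left(-st\bar M+\frac{s^2}{8}\sum_{q=1}^{\bar M}(b_q-a_q)^2\right).
\end{align*}
Finally I would optimize the free parameter $s$. The exponent is a quadratic in $s$ minimized at $s^\star=4t\bar M/\sum_q(b_q-a_q)^2$; substituting this value collapses the exponent to $-2t^2\bar M^2/\sum_q(b_q-a_q)^2$, which is exactly the claimed bound. Since $t\geq0$ the minimizer $s^\star$ is nonnegative, so the optimization stays within the admissible range $s>0$, and the proof is complete.
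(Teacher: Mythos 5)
Your proof is correct: the exponential Markov (Chernoff) step, the factorization by independence, Hoeffding's lemma $\E[e^{sX}]\leq\exp(s^2(b-a)^2/8)$ via the convexity chord bound, and the optimization at $s^\star=4t\bar M/\sum_{q}(b_q-a_q)^2$ all go through, and the exponent does collapse to $-2\bar M^2t^2/\sum_{q}(b_q-a_q)^2$ as claimed. Note that the paper itself offers no proof of this lemma --- it is stated as a quoted result from \cite{hoeffding1994probability} with a \qed\ and used as a black box in the proof of Theorem~\ref{Theorem:NumberOfSamples} --- so your argument simply supplies the standard proof that the citation points to; there is no alternative in-paper route to compare against.
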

\begin{proof}
(of Theorem~\ref{Theorem:NumberOfSamples}). Let $x \in \Xi \backslash \mathbb{X}_0^\epsilon$. Then there exists $l$, a minimizer of $\mathbb{P}(G_l(x,\Lambda))$, $l \in \{1,2,\hdots,N\}$. By definition of $x$, we have $\mathbb{P}(G_l(x,\Lambda))<1-\epsilon$. Define the random variables $Y_j$, for $j=1,2,\hdots,M$, such that $Y_j=\mathbbm{1}\{G_l(x,\Lambda^j)\}$. Consequently,
\begin{align} \label{Equation:EY_j}
    \E Y_j= \mathbb{P}\left(G_l(x,\Lambda^j)\right)<1-\epsilon.
\end{align}
Hence,
\begin{align*}
        &\mathbb{P}(x \in \mathbb{X}_0^{\alpha,M})\\
        &\leq^{\mbox{\textcircled{\tiny 1}}} \mathbb{P}\left(x\in\{ x_0' \in \Xi |\, \frac{1}{M}\sum_{j=1}^M\mathbbm{1}(G_l(x_0',\Lambda^j)) \geq 1-\alpha\}\right),\\
        &=^{\mbox{\textcircled{\tiny 2}}}\mathbb{P}\left(\frac{1}{M}\sum_{j=1}^M Y_j \geq 1-\alpha\right),\\
        &\leq^{\mbox{\textcircled{\tiny 3}}} \mathbb{P}\left(\frac{1}{M}\sum_{j=1}^M \left ( Y_j - \E Y_j \right)\geq -1+\epsilon+1-\alpha\right),\\
        &\leq \mathbb{P}\left(\sum_{j=1}^M \left ( Y_j - \E Y_j \right)\geq M(\epsilon-\alpha)\right),\\
        &\leq^{\mbox{\textcircled{\tiny 4}}} \exp \left(-2 M(\epsilon-\alpha)^2\right),
\end{align*}
where the numbered inequalities follow from: {\textcircled{\tiny 1}}- probability of an intersection of events underbounds that of any single event; {\textcircled{\tiny 2}}, {\textcircled{\tiny 3}}- the definition of $Y_j$ in \eqref{Equation:EY_j}; {\textcircled{\tiny 4}}- Hoeffding's inequality, Lemma~\ref{Hoeffdings}, with $a_j=0,\,b_j=1$ for all $j \in \{1,2,\hdots,M\}$. Therefore,
\begin{align*}
    \mathbb{P}\left ( \mathbb{X}_0^{\alpha,M} \not\subseteq \mathbb{X}_0^\epsilon\right)&= \mathbb{P}\left \{ \textrm{there exists } x \in \mathbb{X}_0^{\alpha,M} \textit{ s.t. } x \not \in \mathbb{X}_0^\epsilon \right \},\\
    &\leq \sum_{x \in \Xi \backslash \mathbb{X}_0^\epsilon} \mathbb{P}(x \in \mathbb{X}_0^{\alpha,M}),\\
    &\leq L\exp \left(-2 M(\epsilon-\alpha)^2\right).
\end{align*}
If the left-hand-side is to be $\delta \in (0,1)$, then, taking the $\log(\cdot)$ of both sides, results in (\ref{Equation:MlowerBound}). 
\end{proof}
If $x_0'\in \mathbb{X}_0^{\alpha,M}$, then $x \in \mathbb{X}_0^\epsilon$ with probability at least $1 - \delta$. A similar result providing a lower bound of $M$ of $\mathcal{O}(\log L)$ can be found for achieving optimality: $x_0'$ in Step~\ref{Step:candidateStateDef} of the algorithm satisfies \eqref{x_0*def} with probability close to one, using an extension of Corollary~6 of \cite{luedtke2008sample}.

The approximations inherent in selecting parameters $\alpha\in(0,\epsilon)$ and $\delta\in(0,1)$, together with the imprecision in the inequality \eqref{Equation:MlowerBound}, are the source of the inbuilt conservatism of the quantifications of the State Selection Algorithm. This will become evident in the achieved constraint violation rates in Section~\ref{Section:examples} where computational examples are conducted with sample repetition value $M=135$ and particle count $L=400$.

\subsection{Computation time}
The State Selection Algorithm has computational complexity $\mathcal{O}(N L\log L)$, per time-step. It is parallelizable in: its second step; across $i$ in the choice of $x_0'$; and also across $j$ over the samples. Using a graphics processing unit (GPU) would further decrease the computation time required. 

As the dimension, $r_x$, of the state increases, the complexity of this approach increases via the requisite (perhaps dramatic) increase in the number of samples, $L$, of the particle filter. Once $L$ is fixed, however, the calculations for $M$ given $\epsilon$, $\alpha$ and $\delta$ abide.

\section{Constrained stochastic linear systems with quadratic cost}\label{Section:LinearPolyhedron}
In this section, we show that under the assumptions of: linear dynamics, linear state feedback control law, quadratic stage costs, and polyhedral state and input constraints, the candidate state is the solution of a quadratic program over $\R^{r_x}$ based on the second-order moments of the densities. This obviates the need for: calculation of $M$ sample sequences and their sample averages of constraint violations and costs, and the inclusion of margin parameters $\alpha$ and $\delta$ earlier. We assume that the filtered conditional state density is described by its first two moments, which, in turn, might be provided by a Kalman filter or particle filter.

At time $k=0$, we have:
\begin{enumerate}[label=\Roman*.]
\item Discrete-time linear state dynamics
\begin{align}
x_{k+1} & = F x_{k} + G u_k+    w_k.\label{LinSys}
\end{align}
\item $\{w_k\}$ is i.i.d., $\E w_k=0$ and $\Cov(w_k)=\Sigma_w$, for all $k$.
\item Minimal constraint violation probability $\epsilon\in(0,1)$ for polyhedral state and input constraint sets,
\begin{equation} \label{Constraints1}
\begin{aligned}
    &\mathbb{X}=\{x_k \in \R^{r_x}| T x_k \leq \bar x \},\, \bar x \in \R^{t},\\
    &\mathbb{U}=\{u_k \in \R^{r_u}| S u_k \leq \bar u \},\,  \bar u \in \R^{m},
\end{aligned}
\end{equation}
where $T \in \R^{t \times r_x}$ and $S \in \R^{m \times r_u}$ have full row rank.
\item\label{step:sigma} The first two moments, $\E x_0=\hat x_0$ and $\Cov(x_0)= \Sigma_0,$ of the density of the initial state $x_0$, which is independent from $w_k$ for all $k$.
\item Linear full-state-feedback control law $u_k=K x_k,$  which is yet neither assumed to be recursively feasible with respect to $\mathbb X$ and $\mathbb U$ nor optimal with respect to a cost, nor indeed stabilizing. 
\item A finite-horizon quadratic trajectory cost function $J$ 
\begin{align*}
    J = x_k^TQ_Nx_k + \sum_{k=0}^{N-1}\left[x_k^TQx_k+u_k^TRu_k\right].
\end{align*}
\end{enumerate}

We derive a variant of the State Selection Algorithm for this case, where expectations of states, costs, constraint violation are directly characterized without the need for sample averages of simulations. The net result is a quadratic program to determine the selected state $x_0^\star$. This is tantamount to operating the State Selection Algorithm with very large $M$ and without the attendant computational demands.

We shall see in the numerical examples later that the state selection algorithm for these constrained linear quadratic problems returns the conditional mean when the constraints are inactive.

The state sequences in \eqref{eq:dashrec} and \eqref{eq:ddashrec} become
\begin{align}
    x_{k+1}'&=(F+GK)x_k'+w_k',\, x_0' \in \R^{r_x}, \label{x_k'defLinear} \\
    x_{k+1}''&=F x_k''+G K x_k'+w_k'',\, x_0'' \sim p_0.\label{x_k''defLinear}
\end{align}
These recursions from the State Selection Algorithm yield:
\begin{itemize}
\item $x'_k$ affine in $x'_0$ and $w'_{k-j}$,
\item $x''_k$ affine in $x'_0$, $x''_0$, $w'_{k-j}$ and $w''_{k-j}$.
\end{itemize}
In turn, this implies that the expected cost function, $J_c$ of \eqref{StateCandidacyMeasure}, is quadratic in $x'_0$. The analysis of the constraints is more work but results in new, but more conservative, polyhedral constraints on $x'_0$.

\subsection{The cost function $J_c$}
For $x_0' \in \mathbb{X}_0^\epsilon$, the cost function $J_c$ is given by \eqref{StateCandidacyMeasure}.
\begin{align}
    J_c(x_0')&=\E_{x_0''}\E_{W'}\E_{W''} \bigg (\sum_{k=0}^{N-1} \Big[ (x_k'')^T Q x_k'' \nonumber\\
    &\hskip 7mm+(x_k')^TK^T R K x_k'\Big] + (x_N'')^TQ_Nx_N'' \bigg ), \label{StateCandidacyMeasureLinear}\\
    &=\E_{x_0''}\E_{W'}\E_{W''} \bigg (\sum_{k=0}^{N-1} \Big[ \trace (Q x_k''(x_k'')^T) \nonumber\\
    &\hskip -5mm+\trace(K^T R K x_k'(x_k')^T)\Big] + \trace(Q_Nx_N''(x_N'')^T) \bigg ). \label{StateCandidacyMeasureLinearTrace}
\end{align}
From (\ref{x_k'defLinear}) and (\ref{x_k''defLinear}), 
\begin{align}
    x_k'&=F_K^k x_0'+\sum_{p=0}^{k-1}F_K^p w'_{k-p-1}, \label{linear'}\\
    x_k''&=F^k x_0''+\sum_{j=0}^{k-1}F^j w''_{k-1-j} \nonumber\\
    &+ \Psi_{k-1} x_0'
    +\sum_{h=0}^{k-1}F^h GK \sum_{p=0}^{k-h-2}F_K^p  w'_{k-h-2-p}, \label{linear''}
\end{align}
where $F_K=(F+GK)$ and $\Psi_{k-1}=\sum_{h=0}^{k-1}F^h G K F_K^{k-h-1}$. That is, these state sequences are affine functions of $x_0''$ and $x_0'$. Hence, the cost $J_c$ is quadratic in $x_0'$ and $x_0''$. 

Denoting mean values,  $\hat x_k'=\E_{W'}x_k'$ and $\hat x_k''=\E_{x_0''}\E_{W'}\E_{W''}x_k''$, we have
\begin{align*}
    \hat x_k'=F_K^k x_0', \quad
    \hat x_k''=F^k \hat x_0''+ \Psi_{k-1} x_0'. \label{means}
\end{align*}

Hence, the errors $\tilde x_k'=x_k'-\hat x_k'$ and $\tilde x_k''=x_k''-\hat x_k''$ are zero by dint of $w'_k$ and $w''_k$ being zero mean.

Using independence assumptions, and ignoring additive constants,
\begin{align}
    J_c(x_0')&=(x_0')^T \mathcal{A}_1 x_0'+(\hat x_0'')^T \mathcal{A}_2 x_0'.
\end{align}
The formul\ae\ for and derivation of $\mathcal{A}_1$ and $\mathcal{A}_2$ can be found in the Appendix.

\vskip 3mm
\begin{corollary}
    Suppose that $K_k$ is the optimal time-variant feedback gain of the unconstrained N-horizon LQ problem with $Q$, $R$ and $Q_N$. Then the candidate state is the conditional mean.\qed
\end{corollary}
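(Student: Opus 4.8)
The plan is to observe that, once $u_k = K_k x_k'$ with $K_k$ the optimal LQ gain, the cost $J_c(x_0')$ in \eqref{StateCandidacyMeasureLinear} is exactly the expected $N$-horizon LQ cost of the trajectory $\{x_k''\}$ driven by the external control sequence $\{u_k\}$ (the state penalty acts on $x''$ and the control penalty $u_k^T R u_k = (K_k x_k')^T R (K_k x_k')$ is the genuine cost of that same $u_k$). I would then apply the standard completion-of-squares (LQ value-function) identity to split $J_c$ into a part constant in $x_0'$ and a nonnegative part minimized precisely at the conditional mean, so that \eqref{x_0*def} returns $\hat x_0''$.

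Concretely, let $\{P_k\}$ solve the Riccati recursion terminated by $P_N=Q_N$, put $\Gamma_k=R+G^TP_{k+1}G$, so $K_k=-\Gamma_k^{-1}G^TP_{k+1}F$. Completing the square stage-by-stage in $x_{k+1}''=Fx_k''+Gu_k+w_k''$ and using $\E w_k''=0$ yields
\[
J_c(x_0')=\E\!\left[(x_0'')^TP_0x_0''\right]+\sum_{k=0}^{N-1}\E\!\left[(u_k-K_kx_k'')^T\Gamma_k(u_k-K_kx_k'')\right]+\sum_{k=0}^{N-1}\E\!\left[(w_k'')^TP_{k+1}w_k''\right],
\]
where the first and third terms are independent of $x_0'$. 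This identity is licit because $u_k=K_kx_k'$ depends only on $x_0'$ and $w_{0:k-1}'$, hence is independent of $w_k''$, killing the noise cross terms. Since $u_k-K_kx_k''=K_k e_k$ with $e_k:=x_k'-x_k''$, and subtracting the recursions \eqref{x_k'defLinear}--\eqref{x_k''defLinear} makes the gain terms cancel, one gets $e_{k+1}=Fe_k+(w_k'-w_k'')$ with $e_0=x_0'-x_0''$, so that $\E e_k=F^k(x_0'-\hat x_0'')$ while $\Cov(e_k)$ depends only on $\Sigma_0$ and the noise covariance, not on $x_0'$. Splitting mean from fluctuation,
\[
\E\!\left[e_k^TK_k^T\Gamma_kK_ke_k\right]=(\E e_k)^TK_k^T\Gamma_kK_k(\E e_k)+\trace\!\left(K_k^T\Gamma_kK_k\Cov(e_k)\right),
\]
the trace terms are constant in $x_0'$; with $\Gamma_k\succ0$ (from $R\succ0$) each remaining quadratic is nonnegative, and the choice $x_0'=\hat x_0''$ forces $\E e_k=0$ for all $k$, driving the $x_0'$-dependent part to its global minimum of zero. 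Hence the unconstrained minimizer of $J_c$ is the conditional mean $\hat x_0''$; provided it lies in $\mathbb{X}_0^\epsilon$ (constraints inactive), it is the candidate state by \eqref{x_0*def}.

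The main obstacle is the first step: justifying the completion-of-squares for the \emph{coupled} pair of trajectories, where the LQ-optimal feedback $K_k$ is applied to $x''$ but evaluated along the companion trajectory $x'$ rather than along $x''$ itself. The care there is in the measurability/independence bookkeeping that makes the $w_k''$ cross terms vanish, so that only the term $\sum_k\E[(K_k e_k)^T\Gamma_k(K_k e_k)]$ retains dependence on $x_0'$. After that, everything reduces to the routine mean/variance decomposition above, the only standing assumptions being $R\succ0$ (so $\Gamma_k\succ0$) and feasibility of the conditional mean.
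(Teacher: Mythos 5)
Your proof is correct, and it is a fully worked-out version of the argument the paper merely gestures at: the paper's ``proof'' of this corollary is a one-line citation to the certainty-equivalence discussion in Anderson and Moore together with the remark that unconstrained LQ performance depends monotonically on the estimate-error covariance. Your completion-of-squares identity, the cancellation of the $w_k''$ cross terms by the independence of $u_k=K_kx_k'$ from $w_k''$, and the reduction of the $x_0'$-dependence to $\sum_k(\E e_k)^TK_k^T\Gamma_kK_k(\E e_k)$ with $\E e_k=F^k(x_0'-\hat x_0'')$ are exactly the content behind that citation. What your version adds, and what the bare citation does not cover, is the adaptation to the paper's nonstandard two-trajectory structure: here the ``estimate'' is a single initial value driving the closed-loop primed recursion while the cost is evaluated along the open-loop-controlled double-primed recursion, so the relevant error obeys $e_{k+1}=Fe_k+(w_k'-w_k'')$ with no measurement updates, rather than the Kalman filter error of the classical separation theorem; your observation that the feedback terms cancel in this difference, leaving $\Cov(e_k)$ independent of $x_0'$, is the step that makes the classical argument actually apply. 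Your caveats ($R\succ 0$ so that $\Gamma_k\succ 0$, and feasibility of $\hat x_0''$ so that the constraint set does not exclude the unconstrained minimizer) are appropriate and consistent with the paper's framing of the corollary as an unconstrained/inactive-constraint statement.
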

This result follows directly from the discrete-time equivalent of the argument provided in \cite[p.~221]{Anderson&Moore:89} and the monotonic dependence of the control performance on the trace of the state estimate covariance in the unconstrained case.

\subsection{The constraints}
The state and input sequences $\{x_k''\}$, $\{u_k\}=\{Kx_k'\} $ are both stochastic by construction. The prediction covariance, $\Sigma'_k$, of $x_k'$ can be described by
\begin{align}\label{covx'}
    \Sigma_{k+1}'=(F+GK)\Sigma_k'(F+GK)+\Sigma_w,\, \Sigma'_0=0,
\end{align}
where the initial covariance is zero because $x_0'$ is deterministic. For $\Sigma''_k$, the covariance of $x_k''$, we have 
\begin{align}\label{covx''}
    \Sigma''_{k+1}=F \Sigma_k'' F^T+ G K \Sigma_k' K^T G^T + \Sigma_w,\, \Sigma_0''=\Sigma_{0}.
\end{align}
The initial covariance $\Sigma_0$ is provided at Item~\ref{step:sigma} in the linear problem statement.

The input sequence covariance follows from (\ref{covx'}).
\begin{equation}
    \Cov(u_k)=\Cov(Kx_k')=K\Sigma_k'K^T.
\end{equation}

Next, we show how the probabilistic constraints in (\ref{Constraints1}) can be transformed into deterministic linear constraints in terms of the state sequence means (\ref{means}) and covariances (\ref{covx'}), (\ref{covx''}).

\vskip 3mm
\begin{proposition} \label{Prop:probtolin}
The probabilistic polyhedral state constraints in (\ref{Constraints1}) are satisfied if the following deterministic polyhedral constraints are satisfied
\begin{align}
    T\hat x_k'' \leq \bar x - \sqrt{\frac{t-\epsilon}{\epsilon}}\sqrt{\textrm{diag}(T \Sigma_{k} T^T)}, \label{constraintsProp}
\end{align}
where: $t$ is the number of rows of $T_k$, the function $\textrm{diag}(\cdot)$ returns the diagonal of a square matrix as a column vector and the second square root is elementwise.
\end{proposition}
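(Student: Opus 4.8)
The plan is to decompose the vector inequality $T x_k'' \le \bar x$ into its $t$ scalar rows, bound the probability that any single row is violated using a one-sided concentration inequality, and then recombine the rows with a union bound so the total violation probability stays below $\epsilon$.

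First I would recast the probabilistic requirement in complementary form, $\mathbb P(x_k'' \notin \mathbb X) \le \epsilon$. Each row of $T$ defines a scalar affine functional $T_i x_k''$ of the random state; from the affine representation \eqref{linear''} together with the covariance recursion \eqref{covx''} it has mean $\mu_i = T_i \hat x_k''$ and variance $\sigma_i^2 = T_i \Sigma_k T_i^T = [\textrm{diag}(T \Sigma_k T^T)]_i$. Since the bad event is the union $\{x_k'' \notin \mathbb X\} = \bigcup_{i=1}^t \{T_i x_k'' > \bar x_i\}$, Boole's inequality gives $\mathbb P(x_k'' \notin \mathbb X) \le \sum_{i=1}^t \mathbb P(T_i x_k'' > \bar x_i)$, so it suffices to force each scalar tail below $\epsilon/t$.

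The next step is to control each scalar tail with Cantelli's (one-sided Chebyshev) inequality, $\mathbb P(T_i x_k'' - \mu_i \ge a) \le \sigma_i^2/(\sigma_i^2 + a^2)$ for $a>0$. Choosing $a = \bar x_i - \mu_i$, which the deterministic constraint makes positive, and imposing $\sigma_i^2/(\sigma_i^2+a^2) \le \epsilon/t$ clears to $(t-\epsilon)\sigma_i^2 \le \epsilon a^2$, i.e.\ $a \ge \sigma_i\sqrt{(t-\epsilon)/\epsilon}$. Rewriting this as $T_i \hat x_k'' \le \bar x_i - \sqrt{(t-\epsilon)/\epsilon}\,\sqrt{[\textrm{diag}(T\Sigma_k T^T)]_i}$ is precisely the $i$-th row of \eqref{constraintsProp}, and summing the $t$ per-row budgets returns $\mathbb P(x_k'' \notin \mathbb X) \le t\cdot(\epsilon/t) = \epsilon$, the claimed guarantee.

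The main obstacle is the choice of concentration inequality rather than any computation: the familiar two-sided Chebyshev bound would yield the looser margin $\sqrt{t/\epsilon}$ instead of the tighter $\sqrt{(t-\epsilon)/\epsilon}$ appearing in the statement, so it is essential to exploit the one-sidedness of each polyhedral half-space through Cantelli's inequality. A secondary point to verify is that the bound is distribution-free, relying only on the first two moments; this is exactly what legitimizes the later reduction to a quadratic program without a Gaussian assumption, at the cost of the conservatism already flagged in the surrounding text.
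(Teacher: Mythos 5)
Your proposal is correct and follows essentially the same route as the paper: the paper likewise splits the polyhedron into its $t$ rows, allocates a per-row violation budget of $\epsilon/t$ via an intersection/union-bound lemma (Lemma~\ref{intersectionsLemma}), and tightens each row with Cantelli's inequality (Lemmas~\ref{Cantelli'sLemma} and~\ref{lemmaSets}) to obtain exactly the margin $\sqrt{(t-\epsilon)/\epsilon}\,\sqrt{T(j)\Sigma_k'' T(j)^T}$. The only cosmetic difference is that you phrase the recombination as Boole's inequality on the complementary events rather than as the intersection bound, which is logically identical.
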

\vskip 3mm
\begin{lemma} \label{Cantelli'sLemma}
\textbf{(Cantelli's inequality)} For a scalar random variable $\gamma$ with mean $\hat \gamma$ and variance $\Gamma$,
\begin{equation} \label{Cantelli's}
    \mathbb{P}(\gamma-\hat \gamma \geq \eta) \leq \frac{\Gamma}{\Gamma+\eta^2}, \, \eta \geq 0.
\end{equation}
\qed
\end{lemma}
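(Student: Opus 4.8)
The plan is to prove this as the classical one-sided Chebyshev (Cantelli) bound via a shift-and-square argument combined with Markov's inequality and a one-parameter optimization. First I would center the variable: setting $\tilde\gamma = \gamma - \hat\gamma$, which has mean zero and variance $\Gamma$, reduces the claim to bounding $\mathbb{P}(\tilde\gamma \geq \eta)$ by $\Gamma/(\Gamma + \eta^2)$. The degenerate cases are dispatched immediately: if $\eta = 0$ the right-hand side equals $1$ and the bound is trivial, and if $\Gamma = 0$ then $\tilde\gamma = 0$ almost surely so the left-hand side is $0$ for $\eta > 0$. Hence I may assume $\eta > 0$ and $\Gamma > 0$ for the substantive part.

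The key device is to introduce a free shift parameter $t > 0$ and exploit monotonicity of squaring on the positive half-line. Since $\eta + t > 0$, the event $\{\tilde\gamma \geq \eta\}$ is contained in $\{\tilde\gamma + t \geq \eta + t\}$, which in turn lies in $\{(\tilde\gamma + t)^2 \geq (\eta + t)^2\}$, the last inclusion holding because on this event both quantities being squared are strictly positive. Applying Markov's inequality to the nonnegative random variable $(\tilde\gamma + t)^2$ and using $\E\,(\tilde\gamma + t)^2 = \Gamma + t^2$ (the cross term vanishes because $\E\,\tilde\gamma = 0$) yields
\begin{align*}
\mathbb{P}(\tilde\gamma \geq \eta) \leq \frac{\Gamma + t^2}{(\eta + t)^2}.
\end{align*}

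It then remains to optimize this bound over $t > 0$. Differentiating the right-hand side produces a numerator proportional to $t\eta - \Gamma$, so the unique stationary point is $t^\star = \Gamma/\eta$; a sign check on the derivative (negative for $t < t^\star$, positive for $t > t^\star$) confirms it is the global minimizer on $(0,\infty)$. Substituting $t^\star$ collapses the expression to $\Gamma/(\Gamma + \eta^2)$, which is exactly the claimed bound after undoing the centering $\tilde\gamma = \gamma - \hat\gamma$. I do not anticipate a genuine obstacle; the only points demanding care are the validity of the squaring step, which is why the shift $t$ is kept strictly positive so that $\eta + t > 0$ holds even for small $\eta$, and the separate treatment of the boundary case $\eta = 0$, at which the interior minimizer $t^\star = \Gamma/\eta$ is undefined.
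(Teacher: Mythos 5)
Your proof is correct, but note that the paper itself offers no proof of this lemma: Cantelli's inequality is stated as a known classical fact (the tombstone follows the statement directly) and is then invoked inside the proof of Lemma~\ref{lemmaSets}, which feeds Proposition~\ref{Prop:probtolin}. So there is nothing in the paper to diverge from; your argument is the standard textbook derivation, and it is sound. The centering step, the event inclusions $\{\tilde\gamma \geq \eta\} \subseteq \{\tilde\gamma + t \geq \eta + t\} \subseteq \{(\tilde\gamma+t)^2 \geq (\eta+t)^2\}$ (valid because $\eta + t > 0$), Markov's inequality applied to the nonnegative variable $(\tilde\gamma+t)^2$ with $\E (\tilde\gamma+t)^2 = \Gamma + t^2$, and the optimization giving $t^\star = \Gamma/\eta$ with minimum value $\Gamma/(\Gamma+\eta^2)$ are all correct; the derivative computation indeed yields a numerator proportional to $t\eta - \Gamma$. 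Two trivial remarks: in the doubly degenerate case $\eta = 0$ and $\Gamma = 0$ the right-hand side of \eqref{Cantelli's} is the undefined quotient $0/0$, but that is an artifact of the statement as written, not of your proof; and your bound requires the variance $\Gamma$ to be finite, which is implicit in the lemma's hypothesis that $\gamma$ ``has variance $\Gamma$.''
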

\vskip 3mm
\begin{lemma} \label{lemmaSets}
For $j=1,\hdots,t$, let $T(j)$ be the $j^{\textit{th}}$ row of $T$ and $\bar x (j)$ be the $j^{\textit{th}}$ element of $\bar x$. The probabilistic constraints
\begin{align}
    \mathbb{P}\Big (T(j)x_k'' \leq \bar x(j)\Big) \leq 1- \frac{\epsilon}{t}, \label{lemma2Ineq}
\end{align}
are satisfied if the following linear inequality holds
\begin{equation} \label{Cantelli'sExt}
   T(j) \hat x_k'' \leq \bar x(j) - \sqrt{\frac{t-\epsilon}{\epsilon}}\sqrt{T(j) \Sigma_k'' T(j)^T}.
\end{equation}
\begin{proof}
Analogous to the work in \cite{farina2013probabilistic}, suppose there exists $\rho \geq 0$ such that
\begin{align} \label{deltaEquation}
T(j) \hat{x}'' \leq \bar x - \rho,
\end{align}
hence the condition $T(j)x''\geq T(j) \hat x''+\rho$ is implied by $T(j)x''\geq \bar x(j)$, so that,
\begin{align*}
    \mathbb{P}\Big(T(j)x''\geq \bar x\Big) &\leq \mathbb{P}\Big(T(j)x''\geq T(j) \hat x''+\rho\Big),\\
    &=\mathbb{P}\Big(T(j)x''-T(j) \hat x''\geq \rho\Big),\\
    &\leq \frac{T(j) \Sigma'' T(j)^T}{T(j) \Sigma'' T(j)^T+\rho^2},
\end{align*}
where the last inequality follows from Cantelli's inequality (\ref{Cantelli's}). If $\rho$ is sufficiently large that the last term is upper bounded by $\epsilon/t$,
\begin{align*}
    \frac{T(j) \Sigma'' T(j)^T}{T(j) \Sigma'' T(j)^T+\rho^2} \leq \frac{\epsilon}{t},
\end{align*}
or equivalently
\begin{align*}
    \rho \geq \sqrt{\frac{t-\epsilon}{\epsilon}}\sqrt{T(j)\Sigma''T(j)^T}.
\end{align*}
This lower bound of $\rho$, when used in (\ref{deltaEquation}), yields (\ref{Cantelli'sExt}).
\end{proof}
\end{lemma}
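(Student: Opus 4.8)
The plan is to collapse the row-$j$ vector constraint to a scalar tail probability and then apply Cantelli's inequality (Lemma~\ref{Cantelli'sLemma}), following the margin construction of \cite{farina2013probabilistic}. First I would set $\gamma=T(j)x_k''$, a scalar linear functional of the affine-in-noise state \eqref{linear''}; its mean is $\hat\gamma=T(j)\hat x_k''$ and, since $T(j)$ is deterministic, its variance is $\Gamma=T(j)\Sigma_k''T(j)^T$ with $\Sigma_k''$ the propagated covariance \eqref{covx''}. The claim is then an upper bound on the violation half-line, $\mathbb{P}(\gamma\ge\bar x(j))\le\epsilon/t$, equivalently a lower bound $\mathbb{P}(\gamma\le\bar x(j))\ge1-\epsilon/t$ on the satisfaction probability.

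Next I would introduce a nonnegative margin $\rho\ge0$ and argue by event inclusion. Writing the hypothesis as $\hat\gamma\le\bar x(j)-\rho$, every outcome in $\{\gamma\ge\bar x(j)\}$ also lies in $\{\gamma\ge\hat\gamma+\rho\}=\{\gamma-\hat\gamma\ge\rho\}$, so monotonicity of probability gives $\mathbb{P}(\gamma\ge\bar x(j))\le\mathbb{P}(\gamma-\hat\gamma\ge\rho)$. Cantelli's inequality applied to the centered variable then bounds the right side by $\Gamma/(\Gamma+\rho^2)$.

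The final step is to pick $\rho$ just large enough to drive this bound below $\epsilon/t$. Imposing $\Gamma/(\Gamma+\rho^2)\le\epsilon/t$ and clearing denominators yields $(t-\epsilon)\Gamma\le\epsilon\rho^2$, i.e.\ $\rho\ge\sqrt{(t-\epsilon)/\epsilon}\,\sqrt{\Gamma}$. Choosing equality fixes the smallest admissible margin, and substituting that $\rho$ back into $\hat\gamma\le\bar x(j)-\rho$ reproduces precisely the linear inequality \eqref{Cantelli'sExt}; chaining the two estimates then delivers $\mathbb{P}(\gamma\ge\bar x(j))\le\epsilon/t$, which is the desired row-$j$ guarantee \eqref{lemma2Ineq}.

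I expect the substantive content to be entirely in these three lines, with the remaining work being sign and admissibility bookkeeping. I would verify $\rho\ge0$ so that both the event inclusion and Cantelli (stated for $\eta\ge0$) are legitimate; this holds because $\Gamma\ge0$ and $t\ge1>\epsilon$ force $(t-\epsilon)/\epsilon>0$. I would also note the harmless distinction between the strict and non-strict tail events, and flag that \eqref{lemma2Ineq} as printed carries an inverted inequality: the quantity actually controlled is the satisfaction probability $\mathbb{P}(T(j)x_k''\le\bar x(j))$, which the argument bounds from below by $1-\epsilon/t$. Beyond finiteness of $\Gamma$ no further hypotheses are required, so I anticipate no real obstacle.
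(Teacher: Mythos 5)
Your proof is correct and takes essentially the same route as the paper's own: the same margin construction $\hat\gamma\le\bar x(j)-\rho$, the same event inclusion and Cantelli bound $\Gamma/(\Gamma+\rho^2)$, and the same algebraic solve for the smallest admissible $\rho\ge\sqrt{(t-\epsilon)/\epsilon}\,\sqrt{\Gamma}$. Your side observation is also right: the inequality in \eqref{lemma2Ineq} as printed is inverted (a typo in the paper), since what the argument actually establishes is the satisfaction bound $\mathbb{P}\bigl(T(j)x_k''\le\bar x(j)\bigr)\ge 1-\epsilon/t$.
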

\vskip 3mm
\begin{lemma} \label{intersectionsLemma}
Let $(\Omega, \mathcal{B}, \mathbb{P})$ be a probability space and $E_i \in \mathcal{B}$ for $i=1,\hdots,n$. If $\mathbb{P}(E_i) \geq 1-\epsilon/n$, for all $i=1,\hdots,n$, then $\mathbb{P}(\bigcap_{i=1}^n E_i)\geq 1-\epsilon$. \qed
\end{lemma}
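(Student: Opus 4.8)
The plan is to pass to complements and invoke the union bound (Boole's inequality), which is the natural tool whenever a lower bound on the probability of an intersection is sought. First I would express the event of interest, $\bigcap_{i=1}^n E_i$, through its complement. By De~Morgan's law,
\[
    \left(\bigcap_{i=1}^n E_i\right)^c = \bigcup_{i=1}^n E_i^c,
\]
so that bounding the intersection from below is equivalent to bounding the union of the complements from above.

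Second, I would estimate each complementary probability directly from the hypothesis. Since $\mathbb{P}(E_i) \geq 1-\epsilon/n$, it follows that $\mathbb{P}(E_i^c)=1-\mathbb{P}(E_i)\leq \epsilon/n$ for every $i$. Applying finite subadditivity of $\mathbb{P}$ to the union and summing these individual bounds yields
\[
    \mathbb{P}\left(\bigcup_{i=1}^n E_i^c\right) \leq \sum_{i=1}^n \mathbb{P}(E_i^c) \leq n\cdot\frac{\epsilon}{n}=\epsilon.
\]

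Finally, I would combine the two observations to conclude
\[
    \mathbb{P}\left(\bigcap_{i=1}^n E_i\right)=1-\mathbb{P}\left(\bigcup_{i=1}^n E_i^c\right)\geq 1-\epsilon,
\]
which is the assertion. I do not anticipate any real obstacle: the argument is a direct application of subadditivity together with complementation, and the only point requiring care is the bookkeeping that the $n$ per-event slacks of size $\epsilon/n$ sum exactly to $\epsilon$ — precisely the reason the tolerance is apportioned as $\epsilon/n$ in the hypothesis. This lemma is exactly the aggregation device needed in Proposition~\ref{Prop:probtolin}: it permits the $t$ row-wise state constraints, each enforced at level $\epsilon/t$ via Lemma~\ref{lemmaSets}, to be combined into the single joint polyhedral guarantee at the overall level $\epsilon$.
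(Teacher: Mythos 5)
Your argument is correct and complete: passing to complements, bounding each $\mathbb{P}(E_i^c)$ by $\epsilon/n$, and applying the union bound is exactly the standard reasoning this lemma rests on. The paper itself states the lemma without proof (the \qed marks it as omitted), and what you have written is precisely the argument the authors implicitly intend, so there is nothing to add or correct.
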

\begin{proof}
\textbf{(of Proposition~\ref{Prop:probtolin})}.  Notice that the state constraint sets in (\ref{Constraints1}) can be written as intersection of sets
\begin{align*}
    \mathbb{X}_k&=\{x_k \in \R^{r_x}| T x_k \leq \bar x_k \},\\
    &=\bigcap_{j=1}^t \{x_k \in \R^{r_x}| T(j) x_k \leq \bar x_k(j) \},
\end{align*}
since all rows are to be enforced simultaneously \cite{luedtke2008sample}. By Lemma~\ref{intersectionsLemma}, $\mathbb{P}(x_k \in \mathbb{X}_k) \geq 1-\epsilon$ is implied by $\mathbb{P}(\{x_k \in \R^{r_x}| T(j) x_k \leq \bar x_k(j) \}) \geq 1-\epsilon/t$. The latter is implied by (\ref{Cantelli'sExt}) in Lemma~\ref{lemmaSets}. Stacking the inequalities in (\ref{Cantelli'sExt}) for all of the $t$ rows of $T$, we get (\ref{constraintsProp}). Notice that with an increase in the number of rows $t$, the constraints become tighter and the approximation more conservative.
\end{proof}
Using parallel arguments, results analogous to those above hold for the probabilistic input constraints. Finally, the probabilistic constraints (\ref{Constraints1}) remain polyhedral in $x_0'$ as follows with $K$ being the nominal state feedback gain and $S$ the matrix in the control constraint \eqref{Constraints1}.
\begin{equation} \label{allConstraints}
    \begin{aligned}
    T \hat x_k'' &\leq \bar x - \sqrt{\frac{t-\epsilon}{\epsilon}}\sqrt{\textit{diag}(T \Sigma_k'' T^T)},\\
    &\hskip -3mm\textit{for all } k=1,2,\hdots,N,\\
    S K\hat x_k' &\leq \bar u - \sqrt{\frac{m-\epsilon}{\epsilon}}\sqrt{\textit{diag}(S K\Sigma_k' K^T S^T)},\\
    &\hskip -3mm\textit{for all } k=1,2,\hdots,N-1,\\
    S K\hat x_0' &\leq \bar u,
    \end{aligned}
\end{equation}
where $\hat x_k'$ and $\hat x_k''$ are solely functions of $x_0'$, as in (\ref{means}). The covariance matrices $\Sigma_k'$ and $\Sigma_k''$ can be computed offline and are independent from $x_0'$. The last inequality is due to the fact that $\Sigma_0'=0$.

We follow \cite{yan2005incorporating} in the usage of the `closed-loop covariance' in (\ref{allConstraints}). That is, we replace the covariance matrices $\Sigma_k'$ and $\Sigma_k''$ by their one step ahead predictions 
\begin{equation}
    \Sigma_1'= \Sigma_w,\quad
    \Sigma_1''=F\Sigma_0''F^T+\Sigma_w, \label{eqn:Sigmas}
\end{equation}
where $\Sigma_0''=\Sigma_0$ is given in (\ref{NonSys}). This accounts for the fact that at the next time step, a new measurement of the system will be available. Thus, this relaxes the constraints and avoids the unbounded growth in $k$ of $\Sigma_k''$ when $F$ is unstable.

The minimization problem, to find $x_0^\star$, in a compact form is a quadratic program in $x_0'$.
\begin{equation} \label{LinearCaseOpt}
    \begin{aligned}
    \min_{x_0'}& \quad (x_0')^T \mathcal{A}_1 x_0'+(\hat x_0'')^T \mathcal{A}_2 x_0',\\
    &\hskip -3mm\textit{subject to}\\
     T \Psi_{k-1}  x_0' &\leq \bar x - \sqrt{\frac{t-\epsilon}{\epsilon}}\sqrt{\textit{diag}(T \Sigma_1'' T^T)}-T F^k \hat x_0'',\\
    &\hskip -3mm\textit{for all } k=1,2,\hdots,N\\
    S K F_K^k x_0' &\leq \bar u - \sqrt{\frac{m-\epsilon}{\epsilon}}\sqrt{\textit{diag}(S K\Sigma_w K^T S^T)},\\
    &\hskip -3mm\textit{for all } k=1,2,\hdots,N-1\\
    S K x_0' &\leq \bar u,
    \end{aligned}
\end{equation}

For linear quadratic stochastic problems with polyhedral constraints, the reduction of the State Selection Algorithm to a quadratic program for $x_0'$ admits an appreciation of the methodology absent the sampling and the sample-average convergence requirements. In turn, this allows comparison with known solutions from the deterministic case \cite{BemporadMorariDuaPistikopoulosAutom2002} and conditional mean state estimates from Kalman filtering.

\section{Numerical examples} \label{Section:examples}
In this section, we present computational examples in which the State Selection Algorithm of Section~\ref{Section:NonlinearSystemsAlgorithm} is applied. We also present an example that belongs to the case of linear systems with quadratic cost and polyhedral constraints; amenable to the quadratic program methods presented in Section~\ref{Section:LinearPolyhedron}. The computational burden of the algorithm is also evaluated, including the propagation of the bootstrap particle filter.

\subsection{Nonlinear system}
We follow the problem formulation schema from Section~\ref{section: ProblemFormulation}.
\begin{enumerate}[label=\Roman*.]
\item The state dynamics are described by 
\begin{align*}
z_{k+1}&=0.9z_k+0.2h_k+w_k^1,\\
h_{k+1}&=-0.15z_k+0.9h_k+0.05z_kh_k+u_k+w_k^2,\\
y_k&=z_{k}+v_k.
\end{align*}
Here $x_k=(\begin{array}{ll}z_k&h_k\end{array})^T\in\mathbb R^2$ is the state vector, $u_k$ the scalar control, $y_k$ the scalar measurement, and $w_k=(\begin{array}{ll}w^1_k&w^2_k\end{array})^T\in\mathbb R^2$ the process noise, and $v_k$ is the scalar measurement noise. 
\item The noises $w_k\sim\mathcal N(0_2,0.3\mathbb I_2)$ and $v_k\sim\mathcal N(0,0.3)$\footnote{$\mathcal{N}(\mu,\Sigma)$ denotes a Gaussian density with mean vector $\mu$ and covariance matrix $\Sigma$.}. 
\item The input constraint set is $\mathbb{U}=[-3,3]$. The state constraint set, $\mathbb{X}$, is the complement of $\mathcal{L}$ in $\R^2$, where $\mathcal{L}=[3,5]\times[-4,2] \cup [-2,5]\times [-7,-4]$. This L-shaped set to be avoided is depicted in the figures below. The constraint violation rate is $\epsilon=0.3$.
\item The $x_0$ state density is provided by a collection of $L=400$ particles in $\mathbb R^2$.
\item We consider two successive nominal controllers: \begin{enumerate}
\item the stabilizing feedback-linearizing control law $u_k=\kappa_1(x_k)=-0.05z_kh_k$; and then,
\item a feasible optimal controller, $\kappa_2(x_k)$, to be detailed shortly.
\end{enumerate}
\item The running cost is $\ell_k(x_k,u_k)=x_k^Tx_k+u_k^2$. The horizon $N=6$.
\end{enumerate}
For the State Selection Algorithm, choose $\alpha=.1$, $\delta=.01$. Theorem~\ref{Theorem:NumberOfSamples} then admits $M=135$. The following sequence is then conducted starting from $\Xi$ being $L=400$ particles sampled from $\mathcal{N}((\begin{array}{ll}7.5&-7.5\end{array})^T,.5\mathbb{I}_2)$.
\begin{enumerate}[label=\roman*)]
\item The selected state, $x_0^\star$, is applied in the control $u_k=\kappa(x_0^\star)$,
\item The output $y_{k+1}$ is measured,
\item A bootstrap particle filter computes an updated set, $\Xi,$ of $L$ filtered particles for $x_{k+1}$,
\item The state selection is re-performed.
\end{enumerate}
With this iteration, the State Selection Algorithm, its attendant particle filter, and the nominal control law can be evaluated jointly for their control performance and constraint handling. Since the paper purports to study state estimation for control, this is a critical evaluation.

To visualize the open-loop dynamics better, Figure~\ref{fig:streamLines} displays the streamlines of the state dynamics with zero input and zero state disturbances.
\begin{figure}[h]
\centering 
\includegraphics{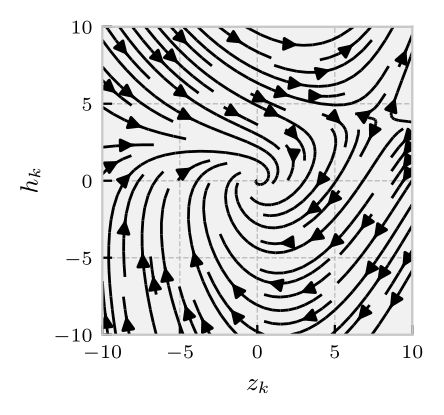} \caption{The streamlines of the nonlinear system state with $u_k$ and $w_k$ set to zero.\label{fig:streamLines}}
\end{figure}

\subsubsection{Example 1: stabilizing controller}\label{subsubsection:example1}

We conduct two comparative simulations of the controlled nonlinear system: one with the State Selection Algorithm, as outlined above, i.e. $u_k=\kappa_1(x_0^\star)$; and the other with what might be termed the certainty equivalence controller, $u_k=\kappa_1(\hat x_{0|0})$ with $\hat x_{0|0}$ being the corresponding particle filter conditional mean. In each case the particle filter evolves according to the respective measured output, which in turn depends on the applied control. Figure~\ref{fig:L_example_SS} displays the controlled state filtered particle density with the State Selection Algorithm feedback. Figure~\ref{fig:L_example_CM} shows the corresponding conditional mean feedback case. Figure~\ref{fig:violationPercentStabilizing} shows the percentage of the particles, at each time step, which violate the state constraints and land inside the set $\mathcal{L}$. This shows that State Selection Algorithm enforcing more caution. 
\begin{figure}[h]
\centering 
\includegraphics{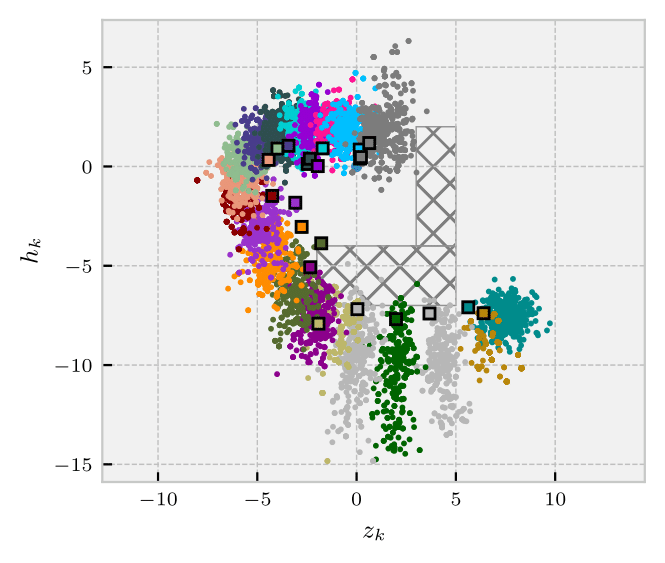} \caption{\textbf{State Selection Algorithm:} The evolution of the particle filtered density with $u_k=\kappa_1(x_0^\star)$, that is, the \textit{candidate state} $x_0^\star$ being used by the controller at each time step. The black squares indicate the location of the selected states from the particle densities. The cross-hatched object is the complement of the state constraint set, $\mathbb X$.\label{fig:L_example_SS}}
\end{figure}
\begin{figure}[h]
\centering 
\includegraphics{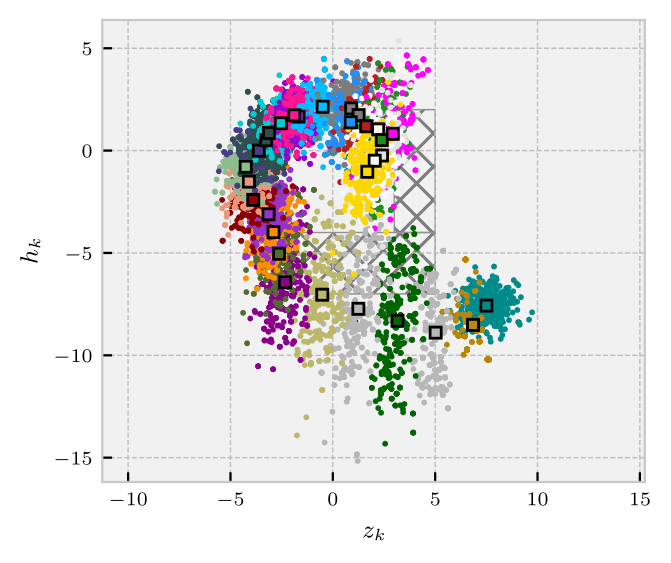} \caption{\textbf{Conditional mean:} The evolution of the particle filtered density with $u_k=\kappa_1(\hat x_{0|0})$, where $\hat x_{0|0}$ is the conditional mean of the particle filtered density indicated by the black square at each time.\label{fig:L_example_CM}}
\end{figure}

We make the following observations concerning these two simulations.
\begin{itemize}
\item The closed-loop particle densities evolve differently because the control signals, and therefore states and measurements, differ.
\item The State Selection Algorithm avoids state constraint violation, perhaps too conservatively, while the conditional mean control violates the state constraint requirements, as measured using the particle density.
\item It is evident that the selected state enforces caution into the subsequent particle density by choosing $x_0^\star$ close to the constraint boundary. This is a property dependent on the nominal control law $\kappa_1(\cdot)$.
\item The state selection simulation stopped at time-step $k=22$ because the feasible set of states in the particle density, $\mathbb{X}_0^{\alpha,M}$, was empty. That is, no state choice $x'_0$ yielded a feasible solution. Again, this is a property stemming from the lack of reachability of the $\kappa_1$-controlled dynamic system close to the origin.
\end{itemize}

\begin{figure}[h]
\centering 
\includegraphics{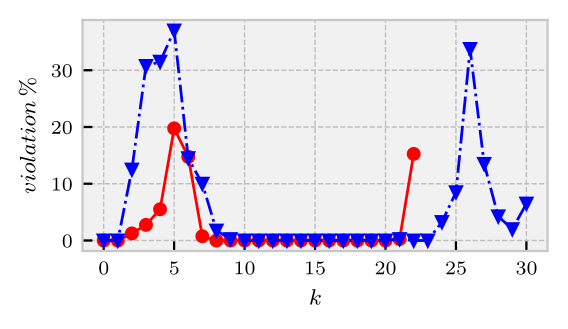} \caption{The blue triangles indicate the state constraint violation rate for the nonlinear system with control $u_k=\kappa_1(\hat x_{0|0})$, that is, using the current conditional mean of the particle density. The red circles indicate the same rate for the control $u_k=\kappa_1(x_0^\star)$ based on the state selection algorithm.\label{fig:violationPercentStabilizing}}
\end{figure}
The simulations were conducted using \python, an uncompiled interpretive program, on an M1-chip 2021 MacBook Pro with 16.00 GB of RAM. The average running time for each time step of the simulation in Figure~\ref{fig:L_example_SS}, comprising the State Selection Algorithm (the dominant load) and the particle filter, is about $4.9$ seconds. The algorithm is completely parallelizable, over the state choice $x_0'$ and over the corresponding samples. Although this was not implemented here, it can offer a potential improvement in computation time.

\subsubsection{Example 2: feasible optimal controller}

We next re-conduct the previous experiment with state feedback controller $\kappa_2(\cdot)$: a ($\mathbb U,\mathbb X)$-feasible, infinite-horizon discounted-cost, optimal controller computed by sampling and a value iteration. Its construction is detailed in the Appendix.

\begin{figure}[h]
\centering 
\includegraphics{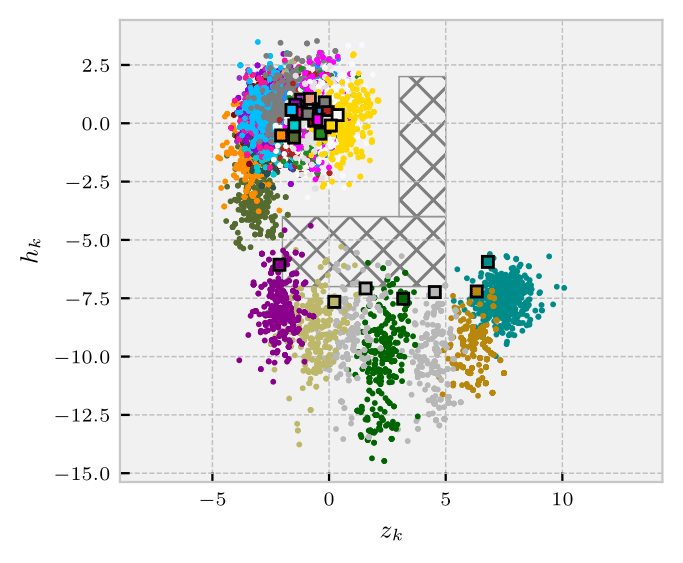} \caption{\textbf{State Selection Algorithm:} The evolution of the particle filtered density with $u_k=\kappa_2(x_0^\star)$. That is, the \textit{candidate state} $x_0^\star$, indicated by the black squares, being used by the controller at each current time step. The black squares indicate the selected state.\label{fig:L_example2_SS}}
\end{figure}
\begin{figure}[h]
\centering 
\includegraphics{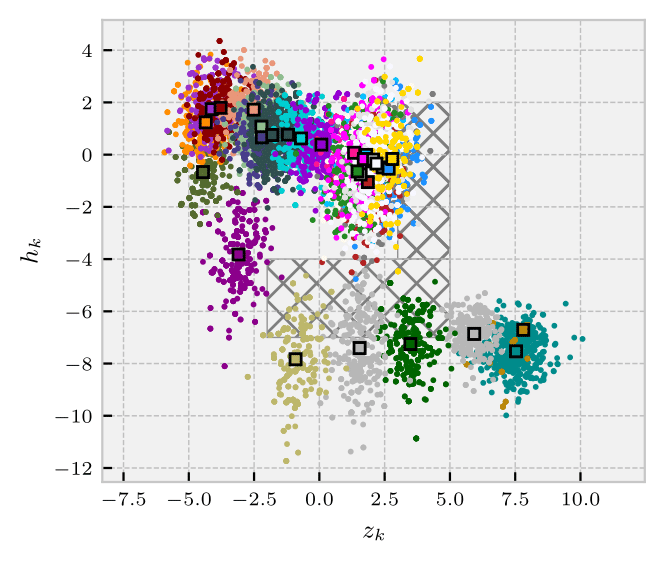} \caption{\textbf{Conditional mean:} The evolution of the particle filtered density with $u_k=\kappa_2(\hat x_{0|0})$, that is, the particle filter conditional mean being used by the controller at each current time step.\label{fig:L_example2_CM}}
\end{figure}

For control law $\kappa_2(\cdot)$, two closed-loop simulations were conducted. Figure~\ref{fig:L_example2_SS} displays the result of using the State Selection Algorithm's $x_0^\star$ in the controller. Figure~\ref{fig:L_example2_CM} shows the corresponding behavior when the particle filters' conditional mean, $\hat x_{0|0}$, is used. Figure~\ref{fig:violationPercent} shows the percentage of the particles, at each time step, which violate the state constraints. The value of $\epsilon$ is 0.3.

\begin{figure}[h]
\centering 
\includegraphics{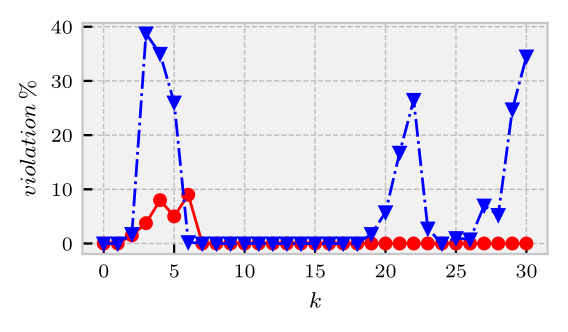} \caption{The blue triangles indicate the state constraint violation rate for the nonlinear system with control $u_k=\kappa_2(\hat x_{0|0})$, that is, using the conditional mean of the particle density. The red circles indicate the same rate for the control $u_k=\kappa_2(x_0^\star)$ based on the state selection algorithm. The value of $\epsilon$ is 30\%.\label{fig:violationPercent}}
\end{figure}

\vskip 3mm
\begin{remark}
The information state/particle density $\Xi$, as discussed in Section~\ref{section: ProblemFormulation}, plays a central role in the State Selection Algorithm; it diversifies the closed-loop control sequences $\{\kappa(x_{k,j}')\}_{k=0}^N$, since $x_0'$ is chosen from the particles in $\Xi$. Therefore, the requirements of a particle filter in the State Selection Algorithm can be categorized in two parts. Firstly, the particle filter has to be a sufficiently accurate approximation to the Bayesian filter. This is a foundational assumption upon which the algorithm is built. Secondly, particle depletion must be avoided in order that particle variability is preserved.

In the above examples, we used a measurement disturbance, $v_k$, of variance $0.3$ and state disturbance, $w_k$, of covariance $0.3\mathbb I_2$. This facilitated retaining diversity in the particle filtered density as shown in the figures. Selecting the variance of the measurement disturbance to be $0.1$ for instance, with the number of particles fixed to $400$, resulted in particle depletion and hence to infeasibility issues with the State Selection Algorithm and furthermore to a poor representation of the Bayesian filter density.

Converse to but alongside the requirements of the particle filter rest those for the control law, $\kappa$, and the state disturbance $w_k$. This subject is broached in Section~\ref{section: ProblemFormulation} as a regularity condition. For the State Selection Algorithm the accessibility of the state and control signals from $w_k$ affects the closed-loop feasibility set, $\mathbb X_0^\epsilon$, for the algorithm. In the nonlinear example with feedback linearizing controller, $\kappa_1$, the feasible set is empty after 22 steps -- this changes with each run. This is a result of a diminished control gain near the origin resulting in poor excitation.
\end{remark}

\subsubsection{Example 3: Linear dynamics with polyhedral constraints}
In this example, a standard DC-DC converter regulation problem is considered. It is a benchmark in the stochastic MPC literature and used in \cite{schluter2022stochastic,cannon2010stochastic,lorenzen2016constraint}. 
\begin{itemize}
\item The dynamics are described by
\begin{align*}
    \begin{pmatrix}
        x_{k+1}^1\\
        x_{k+1}^2
    \end{pmatrix}= x_{k+1}&=Fx_k+Gu_k+w_k,\\
    y_k&=Hx_k+v_k,
\end{align*}
where
\begin{align*}
    F&= \begin{bmatrix}
        1 &0.0075\\
        -0.143 &0.996
    \end{bmatrix}, \quad 
    G= \begin{bmatrix}
        4.798\\
        0.115
    \end{bmatrix},\\
    H&= \mathbb{I}_{2 \times 2}.
\end{align*}
    \item Noise signals $w_k$ and $v_k$ are white with zero means, and independent from each other and from $x_0$.
    \begin{align*}
&\Cov(w_k)=\Sigma_w=0.1 \mathbb{I}_{2\times 2},\,\Cov(v_k)=\textrm{diag}(0.5,0.4),\\
&\E x_0 = (0.6455,1.3751)^T,\, \Cov(x_0)=\Sigma_0=0.1 \mathbb{I}_{2 \times 2}.
\end{align*}
We take the densities to be Gaussian in the simulations.

\item The probabilistic constraints are 
\begin{equation*}
    \mathbb{P}(x^1_k \leq 2) \geq 1-\epsilon, \textrm{ for all $k=1,\hdots,N$}
\end{equation*}
where $\epsilon=10\%$, differently from the 40\%\ of \cite{schluter2022stochastic}. According to Proposition~\ref{Prop:probtolin} and (\ref{LinearCaseOpt}), this is implied by
\begin{align*}
    T \Psi_{k-1} \hat x_0 &\leq  \bar x - \sqrt{\frac{t-\epsilon}{\epsilon}}\sqrt{\textrm{diag}(T \Sigma_{1}'' T^T)}-TF^k\hat x_0'',
\end{align*}
for all $k=1,\hdots,N$. Where $T=(1,0)$, $\bar x=2$, $t=1$, and $\Sigma_1''$ is the prediction covariance and defined in (\ref{eqn:Sigmas}).

\item The linear state-feedback controller $u_k=Kx_k$ is chosen to be the infinite horizon LQR controller with the weighting matrices, similar to those in \cite{schluter2022stochastic}, $Q=\textrm{diag}(1,10)$ and $R=10$
\begin{equation*}
    K= \begin{bmatrix}
        -0.2409   & 0.3930
    \end{bmatrix},
\end{equation*}
these weighting matrices are also used in forming the optimization problem (\ref{LinearCaseOpt}), and $Q_N=Q$.

\item The prediction horizon, for (\ref{LinearCaseOpt}), is chosen to be $N=8$. The associated matrices $\mathcal{A}_1$ and $\mathcal{A}_2$ are
\begin{align*}
\mathcal{A}_1= \begin{bmatrix}
    47.23 & -43.76\\
    -43.76 & 45.51
\end{bmatrix},\quad \mathcal{A}_2=
\begin{bmatrix}
    -93.98 & 87.18\\
 85.33 & -89.45
\end{bmatrix}
 ,
\end{align*}
\end{itemize}

The quadratic program \eqref{LinearCaseOpt} is solved at each time to find the candidate state $x_0^\star$, which is used by the linear state-feedback controller. The state conditional mean and covariance are updated using the Kalman filter, which is the least-squares optimal unbiased estimator still. The result of $100$ closed-loop iterations is shown in Figure~\ref{fig:Linear_SS}. The corresponding simulation using the state conditional mean from the Kalman filter with the linear controller is shown in Figure~\ref{fig:Linear_CM}. 

Notice that when $x_k^2$ is below a certain line, the optimization problem returns, approximately, the conditional mean as the candidate state. However, above that line, the candidate state differs. This is the effect of the constraint, present in the state section process but not in LQR or the Kalman filter. 
\begin{figure}[h]
\centering 
\includegraphics[width=3.0in,height=2.0in]{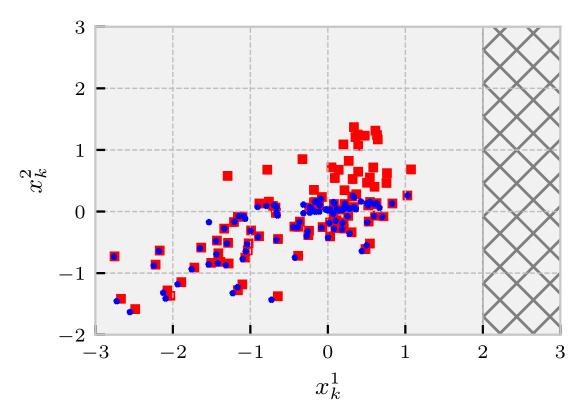} \caption{Values of the Kalman filter conditional mean (red squares) with control $u_k=Kx_0^\star$. The blue dots indicate the values chosen for the candidate state, $x_0^\star,$ used by the controller.
\label{fig:Linear_SS}}
\end{figure}
\begin{figure}[h]
\centering 
\includegraphics[width=3.0in,height=2.0in]{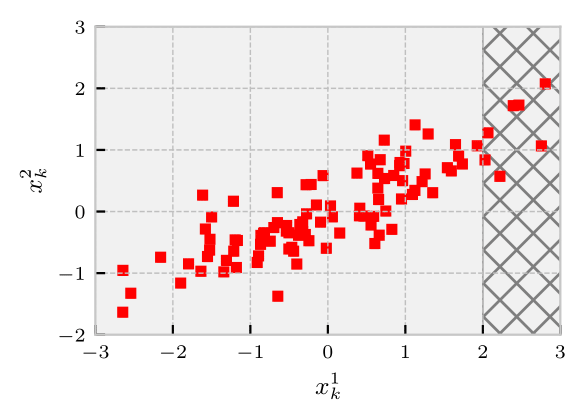} \caption{Values of the Kalman filter conditional mean, $\hat x_{0|0}$, with $u_k=K\hat x_{0|0}$.\label{fig:Linear_CM}}
\end{figure}

Figure~\ref{fig:Linear_xk1} shows: the conditional mean of $x_k^1$, from Figure~\ref{fig:Linear_SS}; its two-sigma intervals propagated by the Kalman filter (the square root of the (1,1) entry of the conditional covariance); and the true state. The shaded area is two standard deviations, or $95\%$ confidence interval. Since the tolerance used in the algorithm for this example is $\epsilon=10\%$, the solution is conservative. 
\begin{figure}[h]
\centering 
\includegraphics[width=3.0in,height=2.0in]{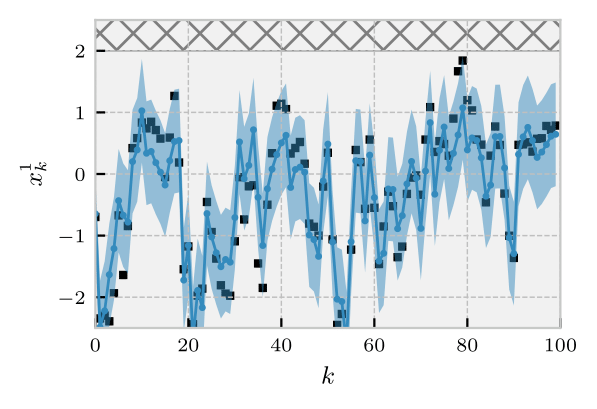} \caption{The dark blue line is the conditional mean of $x_k^1$, the shaded area is the two standard deviations about the conditional mean, and the true state is shown as black squares.\label{fig:Linear_xk1}}
\end{figure}

\section{Conclusion}
The State Selection Algorithm requires a non-empty set of initial states $\mathbb X_0^\epsilon$ to run. The estimator, a particle filter in this case, and the fixed controller $\kappa$, have their roles in the feasibility of the algorithm, as we discussed earlier. A particle filter that is prone to depletion restricts the variability of $\Xi$, and is a poor approximant to the Bayesian filter. While a controller, say $\kappa=0$, eliminates all control sensitivity to the initial state and to the $w'_k$, as is shown in the first example of Section~\ref{Section:examples}. Such a controller would also extirpate the variation seen in the averaged cost.

While replacing the hard probabilistic constraints with soft ones can avoid infeasibility issues, at least in an algorithmic sense. A more concrete understanding of the problem is required, and hence our future work is towards
\begin{itemize}
    \item Enriching the proposed distribution of control sequences with more options, potentially by having $w_k'$ of a different statistics than $w_k''$.
    \item Identifying more practical characteristics required of the controller $\kappa$.
    \item Exploring parameter estimation for control. This can be done by a simple state augmentation, and the provided framework, whether from the particle filter side, or the State Selection Algorithm, can naturally adapt.
\end{itemize}

\bibliographystyle{IEEEtran} 
\bibliography{SE4MPC}

\appendix
\section{Appendix}
\subsection*{Derivation of $\mathcal{A}_1$ and $\mathcal{A}_2$}
Substituting the states in terms of their means and errors, from (\ref{means}), in equation (\ref{StateCandidacyMeasureLinearTrace})
\begin{align}
    &J_c(x_0')=\E_{x_0''}\E_{W'}\E_{W''} \bigg (\sum_{k=0}^{N-1} \Big[ \trace (Q \hat x_k''(\hat x_k'')^T) \nonumber\\
    &\hskip 7mm+\trace(K^T R K \hat x_k'(\hat x_k')^T)+\trace (Q \tilde x_k''(\tilde x_k'')^T)+\nonumber\\
    &\hskip 7mm+\trace(K^T R K \tilde x_k'(\tilde x_k')^T)\Big] + \trace(Q_N\hat x_N''(\hat x_N'')^T)\nonumber\\
    &\hskip 7mm+\trace(Q_N\tilde x_N''(\tilde x_N'')^T)\bigg ), \label{StateCandidacyMeasureLinearTrace1}
\end{align}
where the cross-terms (means and errors) are ignored due to the states' errors' zero means, eventually by expectation.

The error covariances are not functions of $x_0'$, and thus can be replaced by some constant $C$ in the cost in (\ref{StateCandidacyMeasureLinearTrace1}) without altering the minimizer. Hence, up to an additive constant
\begin{align}
    J_c(x_0')&=\sum_{k=0}^{N-1} \Big[ (\hat x_k'')^TQ \hat x_k'' +(\hat x_k')^T K^T R K \hat x_k'\nonumber \\
    &\hskip -5mm+ (\hat x_N'')^T Q_N\hat x_N'' \bigg ] + C,\label{StateCandidacyMeasureLinearTrace2}
\end{align}
where the traces are returned to their quadratic forms. Substituting $\hat x_k'$ and $\hat x_k''$ from (\ref{means}) in (\ref{StateCandidacyMeasureLinearTrace2}) yields
\begin{align}
    J_c(x_0')&=\sum_{k=0}^{N-1} \Big[ (\hat x_0'')^T(F^k)^T Q F^k\hat x_0'' \nonumber \\
    &\hskip -5mm+2(\hat x_0'')^T(F^k)^T Q \Psi_{k-1}x_0'\nonumber\\
    &\hskip -5mm+(x_0')^T\Psi_{k-1}^TQ \Psi_{k-1}x_0'\nonumber\\
    &\hskip -5mm+( x_0')^T(F_K^k)^T K^T R K F_K^k  x_0'\nonumber \\
    &\hskip -5mm+ (\hat x_0'')^T(F^N)^T Q F^N\hat x_0'' \nonumber \\
    &\hskip -5mm+2(\hat x_0'')^T(F^N)^T Q \Psi_{N-1}x_0'\nonumber\\
    &+(x_0')^T\Psi_{N-1}^TQ \Psi_{N-1}x_0' \bigg ] + C.
\end{align}
All terms which are constants with respect to $x_0'$ can be added to $C$ to be $C_1$,
\begin{align}
    J_c(x_0')&=\sum_{k=0}^{N-1} \Big[ 2(\hat x_0'')^T(F^k)^T Q \Psi_{k-1}x_0'\nonumber\\
    &\hskip -5mm+(x_0')^T\Psi_{k-1}^TQ \Psi_{k-1}x_0'\nonumber\\
    &\hskip -5mm+( x_0')^T(F_K^k)^T K^T R K F_K^k  x_0'\nonumber \\
    &\hskip -5mm+2(\hat x_0'')^T(F^N)^T Q_N \Psi_{N-1}x_0'\nonumber\\
    &+(x_0')^T\Psi_{N-1}^T Q_N \Psi_{N-1}x_0' \bigg ] + C_1,\label{StateCandidacyMeasureLinearTrace3}
\end{align}
or in a compact form, ignoring additive constants
\begin{align} \label{PenaltyLinear}
    J_c(x_0')&=(x_0')^T \mathcal{A}_1 x_0'+(\hat x_0'')^T \mathcal{A}_2 x_0',
\end{align}
where
\begin{align}
    \mathcal{A}_2&=\sum_{k=0}^{N-1} \Big[ 2(F^k)^T Q \Psi_{k-1}\Big] + 2(F^N)^T Q_N \Psi_{N-1}, \label{matrixA1} \\
    \mathcal{A}_1&= \sum_{k=0}^{N-1} \Big[ \Psi_{k-1}^TQ \Psi_{k-1}+(F_K^k)^T K^T R K F_K^k\Big] + \nonumber \\ & \hskip 15mm \Psi_{N-1}^T Q_N \Psi_{N-1}.
\end{align}

\subsection*{Computation of $\kappa_2$}
Let $V$ be the value function, the optimal cost, of the infinite-horizon discounted-cost whose stage costs are $\ell_k(x_k,u_k)=x_k^Tx_k+u_k^2$ and with a discount factor $\gamma=0.9$ \cite{bertsekas2012dynamic}. We define $\kappa_2$ as the corresponding optimal control. That is, the Dynamic Programming Equation is
\begin{align} \label{ValueIteration}
    V(x_k)=x_k^Tx_k+\min_{u_k \in \bar{\mathbb{U}}(x_k)} \left \{u_k^2+ \gamma \E_{w_k} V(f(x_k,u_k,w_k)) \right\},
\end{align}
where $\bar{\mathbb{U}}(x_k)$ is the state dependent input constraint set, defined as $\bar{\mathbb{U}}(x_k)=\{ u \in \mathbb{U} \mid\, \mathbb{P}(f(x_k,u_k,w_k) \in \mathcal{L})<\epsilon\}$. 

The controller $\kappa_2$ can be computed via the Value Iteration over the finite gridded state space and input space Markov Decision Process, which if the grid size of these spaces is large enough, $\kappa_2$ is optimal with respect to the original infinite state-space problem \cite{bertsekas1975convergence}.

Following the dynamic programming approach in \cite{rust1997using}, uniformly randomized grids of $4000$ points over $[-10,10]\times[-5,15]$ in the state space and $50$ points in $[-3,3]$, the control space $\mathbb U$, are generated. The continuous probability in the definition of $\bar{\mathbb{U}}(x)$ is replaced by discrete over the points of the grid which are inside $\mathcal{L}$, and the expectation in \eqref{ValueIteration} by its sample average over the grid. Value Iteration was conducted over the grid until convergence, which is guaranteed. The resulting optimal control
\begin{align}
    \kappa_2(x_k)=\argmin_{u_k\in\bar{\mathbb{U}}(x_k)} \left \{u_k^2+ \gamma \E_{w_k} V(f(x_k,u_k,w_k)) \right\},
\end{align}
can be computed by replacing the expectation with its sample average over the grid. A point $\zeta$ on the grid such that $\kappa(\zeta)$ is infeasible is included into the grid defining $\mathcal{L}$. 

After evaluating $\kappa_2$, it is then fitted into a piecewise linear surface over the region $[-10,10]\times[-15,5]$ of the state space, and $\kappa_2(x)$ is then acquired by $2$D linear interpolation.
\end{document}